\newcolumntype{C}[1]{>{\centering\arraybackslash}p{#1}}
\newcommand{\suchthat}{\;\ifnum\currentgrouptype=16 \middle\fi|\;}
\newcommand\restr[2]{{
  \left.\kern-\nulldelimiterspace
  #1
  \vphantom{\big|}
  \right|_{#2}
  }}
\newcommand\DrawGenus[7]{
  \pgfmathsetmacro{\xstart}{#1 - (0.985*#4)}
  \pgfmathsetmacro{\ystart}{#2 + (0.2*#3)}
	\draw[color = #6, rotate around={#5:(#1,#2)}, #7] (\xstart, \ystart) arc (190:350:#4  and #3);
	\draw[color = #6, rotate around={#5:(#1,#2)}, #7] (\xstart, \ystart) arc (190:210:#4  and #3) arc (150:30:#4  and #3) arc (330:350:#4  and #3);
}
\newcommand\DrawFilledGenus[8]{
  \pgfmathsetmacro{\xstart}{#1 - (0.985*#4)}
  \pgfmathsetmacro{\ystart}{#2 + (0.2*#3)}
	\draw[color = #6, rotate around={#5:(#1,#2)}, #7] (\xstart, \ystart) arc (190:350:#4  and #3);
	\draw[color = #6, rotate around={#5:(#1,#2)}, #7] (\xstart, \ystart) arc (190:210:#4  and #3) arc (150:30:#4  and #3) arc (330:350:#4  and #3);
	\draw[color = #6, rotate around={#5:(#1,#2)}, #7, fill = #8] (\xstart, \ystart) arc (190:210:#4  and #3) arc (150:30:#4  and #3) arc (-30:-150:#4  and #3);
}
\newcommand\DrawDonut[7]{
  \pgfmathsetmacro{\fctr}{.08}
  \pgfmathsetmacro{\newwidth}{0.5*#4}
  \pgfmathsetmacro{\newheight}{0.5*#3}
  \draw[color = #6, rotate around={#5:(#1,#2)}, #7] (#1, #2) ellipse (#4  and #3);
  \DrawGenus{#1}{#2}{\newheight}{\newwidth}{#5}{#6}{#7}
}
\newcommand\DrawFilledDonutops[8]{
  \pgfmathsetmacro{\fctr}{.08}
  \pgfmathsetmacro{\newwidth}{0.5*#4}
  \pgfmathsetmacro{\newheight}{0.5*#3}
  \draw[color = #6, rotate around={#5:(#1,#2)}, #7, fill = #6, opacity = .6] (#1, #2) ellipse (#4  and #3);
  \DrawFilledGenus{#1}{#2}{\newheight}{\newwidth}{#5}{#6}{#7}{#8}
}
\tikzstyle{mytheorembox} = [draw=vdgreen, fill=blue!20, very thick, rectangle, rounded corners, inner sep=10pt, inner ysep=10pt]
\tikzstyle{mytheoremfancytitle} =[fill=vdgreen, text=white]
\definecolor{vdblue}{rgb}{0,0,.3}
\definecolor{dblue}{rgb}{0,0,.7}
\definecolor{lblue}{rgb}{.3,.3,1}
\definecolor{vlblue}{rgb}{.7,.7,1}
\definecolor{vvlblue}{rgb}{.9,.9,1}
\definecolor{vdred}{rgb}{.3,0,0}
\definecolor{dred}{rgb}{.7,0,0}
\definecolor{lred}{rgb}{1,.3,.3}
\definecolor{vlred}{rgb}{1,.7,.7}
\definecolor{vdgreen}{rgb}{0,.2,0}
\definecolor{dgreen}{rgb}{0,.4,0}
\definecolor{lgreen}{rgb}{.3,1,.3}
\definecolor{vlgreen}{rgb}{.7,1,.7}
\definecolor{lyellow}{rgb}{1,1,.3}
\definecolor{gray1}{rgb}{0.22,0.22,0.22}
\definecolor{gray2}{rgb}{0.28,0.28,0.28}
\definecolor{gray3}{rgb}{0.36,0.36,0.36}
\definecolor{gray4}{rgb}{0.44,0.44,0.44}
\definecolor{gray5}{rgb}{0.52,0.52,0.52}
\definecolor{gray6}{rgb}{0.6,0.6,0.6}
\definecolor{gray7}{rgb}{0.68,0.68,0.68}
\definecolor{gray8}{rgb}{0.76,0.76,0.76}
\definecolor{color1}{rgb}{1,0,0}
\definecolor{color2}{rgb}{0.98,0,0.816}
\definecolor{color3}{rgb}{0.717,0,1}
\definecolor{color4}{rgb}{0,0,1}
\definecolor{color5}{rgb}{0,1,1}
\definecolor{color6}{rgb}{0,1,0}
\definecolor{color8}{rgb}{1,1,0}
\definecolor{color7}{rgb}{1,0.651,0}
\newcommand{\SF}{\varepsilon}
\newcommand{\T}{\mathbb{T}}
\renewcommand{\SF}{{\mathcal{F}}}
\newcommand{\SL}{{\mathcal{L}}}
\newcommand{\SO}{{\mathcal{O}}}
\newcommand{\ST}{{\mathcal{T}}}
\newcommand{\Z}{\mathbb{Z}}
\newcommand{\R}{\mathbb{R}}
\renewcommand{\S}{\mathbb{S}}
\newcommand{\id}{\textup{id}}
\newcommand{\const}{\textup{const}}
\renewcommand{\mod}{\textup{mod}}
\newcommand{\without}[1]{\backslash\{#1\}}
\newcommand{\bd}{\partial}
\newcommand{\frbd}[1]{\frac{\bd}{\bd #1}}
\newcommand{\frbdt}[2]{\frac{\bd #1}{\bd #2}}
\newcommand{\norm}[1]{\|#1\|}
\newtheorem{proposition}{Proposition}
\newtheorem{theorem}[proposition]{Theorem}
\newtheorem{definition}[proposition]{Definition}
\newtheorem{example}[proposition]{Example}
\theoremstyle{remark}
\newtheorem{remark}[proposition]{Remark}
\begin{document}

\bibliographystyle{alpha}

\title[ Action-angle variables and  KAM  for $b$-Poisson manifolds]{Action-angle variables and a KAM  theorem for $b$-Poisson manifolds}

\author{Anna Kiesenhofer}
\address{Departament de Matem\`{a}tica Aplicada I, Universitat Polit\`ecnica de Catalunya, EPSEB, Avinguda del Doctor Mara\~{n}\'{o}n 44--50, Barcelona, Spain}
\email{anna.kiesenhofer@upc.edu}
\author{Eva Miranda}
\address{Departament de Matem\`{a}tica Aplicada I,
 Universitat Polit\`{e}cnica de Catalunya, EPSEB, Avinguda del Doctor Mara\~{n}\'{o}n 44--50, Barcelona, Spain}
\email{eva.miranda@upc.edu}
\thanks{Anna Kiesenhofer is supported by AGAUR FI doctoral grant. Anna Kiesenhofer and Eva Miranda are partially supported by Ministerio de Econom\'{i}a y Competitividad project GEOMETRIA ALGEBRAICA, SIMPLECTICA, ARITMETICA Y APLICACIONES with reference: MTM2012-38122-C03-01/FEDER and by the European Science Foundation network CAST. Geoffrey Scott was supported by a special grant Ajut Mobilitat EPSEB 2014.}
\author{Geoffrey Scott}
\address{ Department of Mathematics, University of Toronto, Canada}
\email{gscott@math.utoronto.ca}
\date{\today}
\begin{abstract}
  In this article we prove an action-angle  theorem for $b$-integrable systems on $b$-Poisson manifolds improving the action-angle theorem contained in  \cite{Laurent-Gengoux2010} for general  Poisson manifolds in this setting. As an application, we prove a KAM-type theorem for $b$-Poisson manifolds.
\end{abstract}
\maketitle


\section{Introduction}

The classic Hamilton's equations are the  equations of the flow of  a Hamiltonian vector field $X_H$ determined by a Hamiltonian function $H$ and a Darboux symplectic form in position and momenta $\omega_0=\sum_i dp_i\wedge dq_i$ via the correspondence $\iota_{X_H} \omega_0= -dH$.
Symplectic geometry generalizes these equations to the general scenario of Hamiltonian systems associated to a closed non-degenerate $2$-form $\omega$ (general symplectic form).

Among the class of Hamiltonian systems,  the sub-class of integrable systems plays a central role. An integrable system on a $2n$-dimensional symplectic manifold is given by $n-1$ additional first integrals $f_i$ with the property that each integral (including $H$) is preserved by the Hamiltonian flow of the other integrals. This condition is classically known as \emph{involutivity} of the first integrals and can be written in terms of the Poisson bracket as $\{f_i, f_j\}=0$.

Given an integrable system on a symplectic manifold there exist  privileged coordinates in a neighbourhood of an invariant compact submanifold called action-angle coordinates (\cite{mineur}, \cite{landau} and \cite{arnold}\footnote{ The action integrals already appear in a paper by Einstein \cite{einstein}. We refer to \cite{fejoz} for a historical review of the contributions to the action-angle theorem. }). These coordinates not only lead to a direct integration of the Hamiltonian system \cite{liouville} but also they completely describe the geometry of the set of invariant submanifolds. The invariant manifolds are indeed tori named after Liouville. Action-angle variables  describe the set of invariant submanifolds as a  fibration by tori in a neighbourhood of a prescribed Liouville torus.
 The Liouville-Mineur-Arnold theorem also proves that the symplectic structure in this fibration has a Darboux-type form and that the Hamiltonian motion is linear when restricted to the invariant tori.

A generalization of the Liouville-Mineur-Arnold theorem for general Poisson manifolds is contained in \cite{Laurent-Gengoux2010} where a  set of action-angle coordinates is obtained for regular points.  Poisson manifolds \cite{Poisson} constitute a natural generalization of symplectic manifolds that takes singularities (which may correspond to equilibria) into account. Poisson geometry has constituted a  field of research on its own in the last decades. Many Hamiltonian systems (including integrable systems) are naturally formulated in this Poisson setting (see \cite{Laurent-Gengoux2010} and \cite{miranda}).

In this paper we consider a particular class of Poisson manifolds which have been called in the literature $b$-symplectic manifolds, $b$-Poisson manifolds, and log-symplectic manifolds (see  \cite{NestandTsygan}, \cite{Melrose}, \cite{Guillemin2011}, \cite{Guillemin2012}, \cite{Guillemin2013}, \cite{gualtierili}, \cite{gualtierietal}, \cite{marcut}).
The study of $b$-symplectic manifolds is motivated by the study of deformation quantization and pseudodifferential operators on symplectic  manifolds with boundary (see for instance \cite{NestandTsygan} and \cite{Melrose}). A natural set of examples of integrable systems  on $b$-symplectic manifolds comes from this source.

  In this article we improve the action-angle theorem in \cite{Laurent-Gengoux2010} for $b$-Poisson manifolds by extending the construction in  \cite{Laurent-Gengoux2010} to the singular set determined by the $b$-Poisson structure. Our result is optimal in the sense that the number of action cooordinates is half the dimension of the manifold. Our proof strongly uses the main ideas of Duistermaat in \cite{duistermaat} and \cite{Laurent-Gengoux2010}.

  One of the sources of examples of  Poisson manifolds with dense symplectic leaves comes from celestial mechanics where a \emph{singular} change of coordinates takes a Darboux symplectic form to a symplectic form which \lq\lq blows up\rq\rq on a meagre singular set  (see example 3.1.2 in this paper, other examples are contained in \cite{mcgehee} and \cite{devaney}). This includes $b$-symplectic manifolds but also other more degenerate examples as $b^k$-symplectic manifolds (see \cite{scott}).

  In these examples coming from celestial mechanics it is crucial to understand which geometry and dynamics of the integrable system persists under \emph{small} perturbations of the system. In particular to determine whether there are invariant tori and if quasiperiodic trajectories are preserved. Classical KAM theory studies these phenomena in the symplectic zone of the manifold but does not take care of the behaviour at points on the singular set.  In this paper, we address this problem by proving a KAM theorem for Liouville tori in the singular set of a $b$-Poisson manifold\footnote{KAM theorems in the Poisson context have been an object of investigation by Fortunati and Wiggins (see \cite{wiggins}). Their result applied to the $b$-case only considers regular points.}.

{\bf {Organization of this paper:}} Section 2 is devoted to preliminaries on action-angle coordinates, $b$-Poisson manifolds and KAM theory. Section 3 concentrates on integrable systems on $b$-symplectic manifolds and a catalogue of   examples of Hamiltonian systems in $b$-Poisson manifolds. Section 4 contains the statement and the proof for the action-angle theorem for $b$-Poisson manifolds. Section 5 contains a KAM theorem for $b$-Poisson manifolds.

{\bf {Acknowledgements:}} We are deeply thankful to Amadeu Delshams and Victor Guillemin for their contagious enthusiasm on this topic and for many stimulating discussions which in particular brought our attention to the examples contained in \cite{delshams}. Many thanks to  Pol Vanhaecke for his suggestions on  a first version of this paper. We are indebted to Alain Albouy and Jacques F\'{e}joz for their comments and corrections of  the paper and for directing our attention to \cite{einstein} and \cite{fejoz} concerning contributions to the action-angle theorem. Also many thanks  are due to Carles Sim\'{o} for very interesting  remarks concerning the KAM theorem in this paper.

\section{Preliminaries}\label{sec:pre}


\subsection{Action-angle coordinates for Poisson manifolds}

We start by  recalling the definition of an integrable system on a Poisson manifold.

\begin{definition}[Integrable system on a Poisson manifold]\label{def:intsyspoisson}
 Let $(M,\Pi)$ be a Poisson manifold of (maximal) rank $2r$ and of dimension $N$. An $s$-tuple of functions
  $F=(f_1,\dots,f_s)$ on $M$ is a {\bf (Liouville) integrable system} on $(M,\Pi)$ if
  \begin{enumerate}
    \item $f_1,\dots,f_s$ are independent (i.e., their differentials are independent on a dense open subset of $M$);
    \item $f_1,\dots,f_s$ are in involution (pairwise);
    \item $r+s=N$.
  \end{enumerate}
  Viewed as a map, $F:M\to\R^s$ is called the {\bf momentum map} of $(M,\Pi,F)$.
\end{definition}

Consider a point $m\in M$  where the $df_i$ are independent and the rank of $\Pi$ is $2r$.  By the Frobenius theorem,  the Hamiltonian vector fields $X_{f_1},\dots,X_{f_s}$  which span an involutive distribution define an integrable distribution at $m$. Denote by  $\SF_m$  the integral manifold of this distribution passing through $m$. When the $r$-dimensional manifold $\SF_m$ is compact, the action-angle coordinate theorem  proved in  \cite{Laurent-Gengoux2010} (Theorem 1.1 in the paper) says the following:

\begin{theorem}\label{thm:action-angle_intro} 
  Let $(M,\Pi)$ be a Poisson manifold of dimension $N$ and (maximal) rank~$2r$. Suppose that $F=(f_1,\dots,f_s)$
  is an integrable system on $(M,\Pi)$ and assume $m$ and $\SF_m$ satisfy the conditions above.

  Then there exist ${\R} $-valued smooth functions $(\sigma_1,\dots, \sigma_{s})$ and $ {\R}/{\Z}$-valued smooth
  functions $({\theta_1},\dots,{\theta_r})$ defined in a neighborhood $U$ of $\SF_m$ such that
  \begin{enumerate}
    \item the functions $(\theta_1,\dots,\theta_r,\sigma_1,\dots,\sigma_{s})$ define an isomorphism
      $U\simeq\T^r\times B^{s}$, where $B^s$ is an open ball in $\mathbb{R}^s$;
    \item the Poisson structure can be written in terms of these coordinates as
      $$
        \Pi=\sum_{i=1}^r \frbd{\theta_i}\wedge \frbd{\sigma_i}.
      $$
      In particular, the functions $\sigma_{r+1},\dots,\sigma_{s}$ are Casimirs of $\Pi$ on $U$;
    \item the leaves of the surjective submersion $F=(f_1,\dots,f_{s})$ are the level sets of the projection of $\T^r\times B^{s}$ onto $B^{s}$; in particular, the functions $\sigma_1,\dots,\sigma_{s}$ depend on the functions $f_1,\dots,f_{s}$ only.
  \end{enumerate}

\end{theorem}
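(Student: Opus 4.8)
The plan is to follow the classical Liouville--Mineur--Arnold scheme, adapted to the Poisson setting by working \emph{leafwise} and then checking that everything varies smoothly in the transverse (Casimir) directions. Reorder the $f_i$ so that $X_{f_1},\dots,X_{f_r}$ are linearly independent along $\SF_m$; since the $f_i$ are in involution these Hamiltonian vector fields commute, $[X_{f_i},X_{f_j}]=X_{\{f_i,f_j\}}=0$, and they are tangent to the fibres of $F$. After restricting to an $F$-saturated neighbourhood of the compact fibre $\SF_m$, the joint flow defines an $\R^r$-action whose orbits are the connected components of the fibres of $F$; the standard argument then identifies each fibre with $\R^r/\Lambda_c$, where $\Lambda_c\subset\R^r$ is the isotropy (period) lattice over $c=F(m')$, so $\SF_m\iso\T^r$ and, $F$ being a submersion with compact fibres, a neighbourhood is a torus fibration $U\iso\T^r\times B^s$. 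The lattice $\Lambda_c$ varies smoothly, so on $B^s$ one may choose a smooth basis $\lambda_1(c),\dots,\lambda_r(c)$; the fields $Y_i:=\sum_{j=1}^r(\lambda_i(F))_j\,X_{f_j}$ then have identity time-one flows and generate a $\T^r$-action, and a smooth choice of section $\{\theta=0\}$ produces angle coordinates $\theta_1,\dots,\theta_r\in\R/\Z$ with $Y_i=\frbd{\theta_i}$. This settles item (1).

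The heart of the proof is the construction of the action functions $\sigma_1,\dots,\sigma_r$, and I claim it suffices to show that each generator $Y_i$ is \emph{Hamiltonian}, $Y_i=X_{\sigma_i}$. Indeed, if such a $\sigma_i$ exists then $\{\sigma_i,f_k\}=X_{\sigma_i}(f_k)=Y_i(f_k)=0$ for all $k$ (as $Y_i$ is tangent to the fibres), and since the $X_{f_k}$ span the tangent spaces to the fibres, $\sigma_i$ is constant on them, i.e.\ automatically a function of $F$. To build $\sigma_i$ I restrict to a symplectic leaf $S$: there the fibres $\SF_c\cap S$ are half-dimensional and isotropic, hence Lagrangian tori in $(S,\omega_S)$, so the classical theory applies and $\sigma_i$ is the action integral $\tfrac{1}{2\pi}\oint_{\gamma_i(c)}\beta$, with $d\beta=\omega_S$. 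Writing $Y_i=\sum_j a_{ij}(F)X_{f_j}$, solving $X_{\sigma_i}=Y_i$ with $\sigma_i=\Sigma_i(F)$ amounts to the closedness of the period one-forms $\alpha_i:=\sum_j a_{ij}\,dc_j$; this is \emph{Duistermaat's lemma}, and in the Poisson setting it reflects $d\omega_S=0$, which is exactly the Jacobi identity for $\Pi$ read off on the leaves. The remaining point is that this leafwise construction depends smoothly on the leaf; the leaves are regular (the rank is locally constant equal to $2r$), so this smoothness holds and produces $\sigma_1,\dots,\sigma_r\in C^\infty(U)$.

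Next I produce the Casimirs and the normal form. Regularity makes the symplectic foliation have an $(N-2r)=(s-r)$-dimensional local leaf space, so near $m$ there are exactly $s-r$ independent Casimir functions; each is constant on the leaves, hence on the fibres $\SF_c\subset S$, hence a function of $F$, giving $\sigma_{r+1},\dots,\sigma_s$. Together with $\sigma_1,\dots,\sigma_r$ they are independent: a relation $\sum_{i\le r}a_i\,d\sigma_i+\sum_{k>r}b_k\,d\sigma_k=0$ yields $\sum_{i\le r}a_iY_i=0$ after applying $\Pi^{\sharp}$, forcing all $a_i=0$ and then all $b_k=0$, so $\sigma_1,\dots,\sigma_s$ are coordinates on $B^s$, establishing item (3). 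Now I verify the brackets. All $\sigma_i$ are functions of $F$ and functions of an involutive system Poisson-commute, so $\{\sigma_i,\sigma_j\}=0$. From $X_{\sigma_j}=Y_j=\frbd{\theta_j}$ one gets $\{\sigma_j,\theta_i\}=\delta_{ij}$ for $j\le r$ and $\{\sigma_j,\theta_i\}=0$ for $j>r$. Finally, because $\{\sigma_i,\cdot\}$ sends each coordinate to a constant, the Jacobi identity gives $\frbd{\theta_i}\{\theta_k,\theta_l\}=0$, so $\{\theta_k,\theta_l\}$ depends only on the $\sigma$'s; Jacobi also makes $\sum_{k<l}\{\theta_k,\theta_l\}\,d\sigma_k\wedge d\sigma_l$ closed, and a shift $\theta_i\mapsto\theta_i-g_i(\sigma)$ (preserving both the angle normalization and the identity $\frbd{\theta_i}=X_{\sigma_i}$) arranges $\{\theta_k,\theta_l\}=0$. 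The only surviving brackets are then the canonical ones, which are precisely those of $\Pi=\sum_{i=1}^r\frbd{\theta_i}\wedge\frbd{\sigma_i}$ (after fixing orientations), giving item (2).

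The main obstacle is the closedness of the period one-forms $\alpha_i$ — equivalently, that the torus generators $Y_i$ are Hamiltonian — together with the requirement that the leafwise action integrals glue smoothly across the symplectic leaves, i.e.\ through the Casimir directions. This is where the geometry genuinely enters, via Duistermaat's argument and the Jacobi identity for $\Pi$; the topological identification of the fibres and the final bracket bookkeeping are then routine.
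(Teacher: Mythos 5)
First, a point of context: the paper does not prove Theorem~\ref{thm:action-angle_intro} at all --- it is quoted verbatim from \cite{Laurent-Gengoux2010} --- so the fair comparison is with the proof there, which the paper reproduces in adapted form for the $b$-case in Section~\ref{sec:aacoo}. Your architecture coincides with that proof through its first half: the topological identification $U\simeq\T^r\times B^s$, the uniformization of periods via a smooth lattice basis $\lambda_i$, and the reduction of everything to showing that the generators $Y_i=\sum_j\lambda_i^jX_{f_j}$ of the $\T^r$-action are Hamiltonian for functions of $F$. You diverge at that key step and at the finish. Where \cite{Laurent-Gengoux2010} (and Step 2 of the paper's proof of Theorem~\ref{thm:aa}) proves closedness of $\sum_j\lambda_i^j\,df_j$ \emph{globally on $U$} by the algebraic identity $\mathcal{L}_{Y_i}\mathcal{L}_{Y_i}\Pi=0$ combined with the lemma that a periodic vector field $Y$ with $\mathcal{L}_Y\mathcal{L}_Y\Pi=0$ is Poisson, and then extracts the normal form from the Darboux--Carath\'eodory theorem, you construct the $\sigma_i$ as \emph{leafwise} Mineur period integrals and finish by direct bracket bookkeeping plus the shift $\theta_i\mapsto\theta_i-g_i(\sigma)$. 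The finishing argument is fine (it is essentially Duistermaat's), and your observation that any discrepancy $d\sigma_i-\sum_j\lambda_i^j df_j$ supported on the Casimir directions is killed by $\Pi^{\sharp}$ would indeed rescue the leafwise construction.

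The genuine gap is the transverse smoothness of the leafwise construction, which you correctly identify as ``the main obstacle'' but then dispatch with the single clause ``the leaves are regular (the rank is locally constant equal to $2r$), so this smoothness holds.'' That is not an argument. Constancy of the rank gives you, via the regular Weinstein splitting, \emph{local} product coordinates near $m$, but it does not by itself produce a family of primitives $\beta_S$ of the leafwise symplectic forms $\omega_S$, defined on saturated neighbourhoods of the tori and varying smoothly with the leaf; nor does it show that the period integrals $\tfrac{1}{2\pi}\oint_{\gamma_i(c)}\beta_S$ assemble into a $C^\infty$ function on all of $U$ (a priori the symplectic foliation restricted to $U$ need not even be presented as a fibration until the Casimirs have been extended to $U$ --- and your extension of the local Casimirs to the saturated neighbourhood itself needs the small flow-invariance argument that a fibrewise-constant combination $\sum_j a_j X_{f_j}$ vanishing on an open subset of a torus vanishes on the whole torus). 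This transverse regularity is precisely the content that distinguishes the Poisson statement from the classical symplectic one, and it is exactly what the $\mathcal{L}_{Y_i}\mathcal{L}_{Y_i}\Pi=0$ route is designed to bypass: that computation needs only that the $\lambda_i^j$ are basic, holds on all of $U$ at once with no leafwise/transverse split, and delivers the closedness of $\sum_j\lambda_i^j df_j$ (hence a smooth primitive on the ball $B^s$) in one stroke. To complete your proof you would either need to supply a foliated Poincar\'e-lemma argument producing the smooth family $\beta_S$, or replace this step by the Poisson-vector-field argument of \cite{Laurent-Gengoux2010}.
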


\begin{remark}
 Note that in the case where $(M,\Pi)$ is symplectic, i.e. $r = \dim(M)/2 = N/2 $ the result above is the classical Liouville-Arnold-Mineur theorem: For a point $m$ as above, the integral manifold $\SF_m$ is an $r$-torus and on a neighborhood around $\SF_m$ we have a locally trivial fibration by $r$-tori, such that the flow of the integrable system is linear on the tori.
\end{remark}


\subsection{$b$-Poisson manifolds}

A particular class of Poisson manifolds, which are very close to being symplectic, are the so called {\bf $b$-Poisson manifolds}.

\begin{definition}\label{def:bpoisson}
Let $(M^{2n},\Pi)$ be a Poisson manifold. If the map
$$p\in M\mapsto(\Pi(p))^n\in\bigwedge^{2n}(TM)$$
is transverse to the zero section, then $\Pi$ is called a \textbf{$b$-Poisson structure} on $M$. The pair $(M,\Pi)$ is called a \textbf{$b$-Poisson manifold}. The vanishing set of $\Pi^n$ is a hypersurface, which we denote by $Z$ and call the {\bf exceptional hypersurface} of $(M,\Pi)$.
\end{definition}

\begin{example}
 On the  2-sphere $\mathbb S^2$ with the usual coordinates $(h,\theta)$, the Poisson structure $\Pi=h\frac{\partial}{\partial h}\wedge\frac{\partial}{\partial \theta}$ vanishes transversally along the equator $Z=\{h=0\}$ and thus it defines a $b$-Poisson structure on $(\mathbb S^2,Z)$.

\end{example}

\begin{example}\label{example2}
For higher dimensions we may consider the following product structures: let $(\mathbb S^2,Z)$ be the sphere in the example above and $(S^{2n},\pi_S)$ be a symplectic manifold, then $(\mathbb S^2\times S,\pi_{\mathbb S^2}+\pi_S)$ is a $b$-Poisson manifold of dimension  $2n+2$.  We may replace $(\mathbb S^2,Z)$ by a $(R,\pi_R)$   Radko compact surface (see \cite{Guillemin2012}).

\end{example}

If $t$ is a local defining function\footnote{There always exists a global defining function if $M$ is oriented, since we can divide $\bigwedge^n \Pi$ by a global nowhere-vanishing section of $\bigwedge^{2n} TM$.} of $Z$, then locally around a point of $Z$ a $b$-Poisson structure has the form
\[
\Pi= \sum_{i=1}^{n-1} \frac{\partial}{\partial x_i} \wedge \frac{\partial}{\partial y_i} + t \frac{\partial}{\partial t} \wedge \frac{\partial}{\partial z}.
\]

\vspace{0.5cm}

One aim of this paper is to show that for $b$-Poisson manifolds  $(M^{2n},\Pi)$ for which the exceptional hypersurface has a trivial normal bundle, we can obtain a stricter version of Theorem~\ref{thm:action-angle_intro}, for which the compact integral manifolds are $n$-dimensional Liouville tori, even on the exceptional hypersurface where the rank of $\Pi$ is only $2n-2$. In this sense, the result is similar to the symplectic case. Around a Liouville torus lying inside the exceptional hypersurface, there is a neighborhood $U$ with action-angle coordinates  $(\theta_1,\dots,\theta_n,\sigma_1,\dots,\sigma_{n}): U \to  \T^n \times B^n$ such that
\begin{equation}\label{pi_in_aacoo}
        \Pi=\sum_{i=1}^{n-1} \frbd{\theta_i}\wedge \frbd{\sigma_i} + \frac{1}{c} \sigma_n \frbd{\theta_n}\wedge \frbd{\sigma_n},
\end{equation}
and the ``action coordinates'' $\sigma_1,\ldots, \sigma_n$ depend only on the integrals of the system. The number $c\in \R$ is the modular period of the connected component of $Z$ in which the Liouville torus lies (see Definition~\ref{def:modperiod} below).

To see why we must assume that the exceptional hypersurface has a non-trivial normal bundle, consider the M\"obius band $\mathbb{R}^2 / (x, y) \sim (x+1, -y)$ with the $b$-Poisson structure $y\frbd{x}\wedge \frbd{y}$ and the function $f = -\textrm{log}|y|$. Here, the orbits of $X_f = \frbd{x}$ do not define a trivial fibration semilocally around the orbit $\{y = 0\}$, so there cannot be action-angle coordinates for this example. For the remainder of this paper, we will assume that the exceptional hypersurface has a trivial normal bundle. That is, it is cut out by a defining function on a tubular neighborhood of $Z$.

\begin{remark}
 In contrast to the general action-angle coordinate theorem for Poisson manifolds, where we only have action-angle coordinates on the points where $\Pi$ has full rank, our result specifically concerns the degeneracy locus of $\Pi$. Away from the degeneracy locus we can apply the standard action-angle coordinate theorem for symplectic manifolds.
\end{remark}

In the next section we will introduce the definitions needed to formulate the result above in a more precise way.


\subsection{Basics on $b$-symplectic geometry}

Recall from  \cite{Guillemin2012} that a $b$-manifold $(M, Z)$ is a manifold $M$ together with an embedded hypersurface $Z$; a $b$-vector field is a vector field on $M$ which is tangent to $Z$, and that the space of $b$-vector fields are naturally sections of a vector bundle on $M$ called the $b$-tangent bundle $^bTM$. Sections of the exterior algebra of the $b$-cotangent bundle $^bT^*M := (^bTM)^*$ are called $b$-forms, and there is a natural differential on the space of $b$-forms which defines a differential complex whose cohomology (called $b$-cohomology, written $^bH^*(M)$) satisfies the following relationship to the cohomology of $M$ and $Z$,

\begin{theorem}[{\bf{Mazzeo-Melrose}}]\label{thm:mazzeomelrose}
The $b$-cohomology groups of $M^{2n}$ satisfy
$$^b H^*(M)\cong H^*(M)\oplus H^{*-1}(Z).$$
\end{theorem}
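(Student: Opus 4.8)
The plan is to prove the Mazzeo--Melrose theorem by constructing an explicit short exact sequence of complexes that relates the complex of $b$-forms to the ordinary de Rham complex of $M$, with the ``extra'' term accounting for the singular behaviour along $Z$. The key structural fact is that near $Z$, a defining function $t$ for $Z$ gives a distinguished $b$-form $\frac{dt}{t}$, and every $b$-form can be decomposed according to whether or not it contains this singular factor. Concretely, I would first establish the local normal form: on a tubular neighborhood of $Z$ with coordinate $t$ transverse to $Z$ and coordinates $z$ along $Z$, any $b$-form $\omega$ of degree $k$ can be written uniquely as
\begin{equation*}
\omega = \frac{dt}{t}\wedge \alpha + \beta,
\end{equation*}
where $\alpha$ and $\beta$ are (smooth, genuine) forms on $M$ that do not involve $\frac{dt}{t}$. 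This decomposition is the crux: it defines a restriction-type map sending $\omega$ to the pullback $i_Z^*\alpha$, the piece of the singular part living on $Z$.

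Next I would assemble this into a short exact sequence of cochain complexes. The map $\omega \mapsto i_Z^*\alpha$ gives a surjection from the $b$-de Rham complex $^b\Omega^k(M)$ onto the ordinary de Rham complex $\Omega^{k-1}(Z)$ (the degree shift coming precisely from stripping off the $\frac{dt}{t}$ factor), and I must check it is a cochain map, i.e. that it commutes with the respective differentials up to the sign conventions. The kernel of this map should be identified with the ordinary de Rham complex $\Omega^k(M)$ of smooth forms on $M$: the point is that a $b$-form whose singular part vanishes on $Z$ is in fact a genuine smooth form, because $t\cdot\frac{dt}{t} = dt$ is smooth. This yields
\begin{equation*}
0 \longrightarrow \Omega^{\bullet}(M) \longrightarrow {}^b\Omega^{\bullet}(M) \longrightarrow \Omega^{\bullet - 1}(Z) \longrightarrow 0.
\end{equation*}

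The heart of the argument is then the long exact sequence in cohomology induced by this short exact sequence, so I would need to show that the connecting homomorphism vanishes, which makes the long exact sequence split into short exact pieces and delivers the direct sum $^bH^k(M)\cong H^k(M)\oplus H^{k-1}(Z)$. To see the splitting, I would exhibit an explicit section: given a closed form on $Z$, extend it to a tubular neighborhood by pulling back along the projection $\pi$ to $Z$, multiply by $\frac{dt}{t}$, and cut off with a bump function to globalize, checking that this produces a closed $b$-form whose image is the original class and whose construction is well-defined on cohomology. This realizes $H^{k-1}(Z)$ as a direct summand of $^bH^k(M)$ and forces the connecting map to be zero.

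The main obstacle I anticipate is verifying the two compatibility-with-$d$ claims carefully and showing that the splitting map is well-defined at the level of cohomology rather than merely at the level of forms. The subtle points are that the decomposition $\omega = \frac{dt}{t}\wedge\alpha + \beta$ depends a priori on the choice of defining function $t$ and of the tubular neighborhood structure, so one must confirm that the induced map on cohomology is independent of these choices; and that the cutoff function used in constructing the section does not spoil closedness in cohomology. I expect these to be handled by a standard argument showing that different choices differ by an exact $b$-form, together with the observation that near $Z$ the relevant forms are determined by their restriction to $Z$ up to controlled error. Once these verifications are in place, the Mayer--Vietoris-style formalism of the long exact sequence does the rest.
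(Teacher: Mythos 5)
The paper states this result without proof, citing it as the Mazzeo--Melrose theorem (it is proved in the reference \cite{Guillemin2012} alluded to elsewhere in the text), and your argument is precisely that standard proof: the decomposition $\omega=\frac{dt}{t}\wedge\alpha+\beta$ (which the paper itself quotes in a footnote), the resulting short exact sequence of complexes $0\to\Omega^{\bullet}(M)\to{}^b\Omega^{\bullet}(M)\to\Omega^{\bullet-1}(Z)\to 0$, and the splitting of the long exact sequence via the explicit section $\eta\mapsto\rho(t)\,\frac{dt}{t}\wedge\pi^*\eta$. Your outline is correct, and the points you flag as needing care (the sign in the cochain-map check, independence of $i_Z^*\alpha$ from the choice of defining function, and well-definedness of the section on cohomology) are exactly the verifications the standard proof carries out.
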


Under the Mazzeo-Melrose isomorphism, a $b$-form of degree $p$ has two parts: its first summand, the \emph{smooth} part, is determined (by Poincar\'{e} duality) by integrating the form along any $p$-dimensional cycle transverse to $Z$ (such an integral is improper due to the singularity along $Z$, but the principal value of this integral is well-defined). The second summand, the \emph{singular} part, is the residue of the form along $Z$.

\begin{definition}({\bf $b$-functions})
 A $b$-function on a $b$-manifold $(M,Z)$ is a function which is smooth away from $Z$, and near $Z$ has the form
$$ c \log |t| + g, $$
where $c\in \R, g\in C^\infty,$ and $t$ is a local defining function. The sheaf of $b$-functions is denoted $^b \! C^\infty$.
\end{definition}

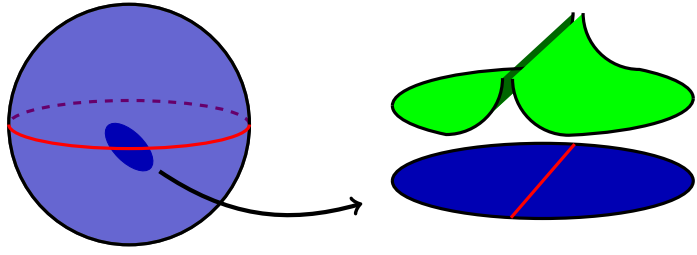
\begin{figure}
\centering
\begin{tikzpicture}

\pgfmathsetmacro{\ellbasex}{7}
\pgfmathsetmacro{\ellbasey}{3.5}

\pgfmathsetmacro{\majoraxis}{2}
\pgfmathsetmacro{\minoraxis}{.5}

\pgfmathsetmacro{\rlineymid}{4.25}
\def\R{1.6}
\pgfmathsetmacro{\circlex}{1.5}

\draw[dashed, very thick, color = red] (\circlex + \R, \rlineymid) arc (0:180:{\R} and {\R * .2});
\draw[very thick, fill = dblue, opacity = .6] (\circlex, \rlineymid) circle (\R);
\draw[very thick] (\circlex, \rlineymid) circle (\R);
\draw[rotate around={-45:(\circlex, \rlineymid - .3)},dblue, fill = dblue] (\circlex, \rlineymid - .3) ellipse (.4 and .2);
\draw[very thick, color = red] (\circlex + \R, \rlineymid) arc (0:-180:{\R} and {\R * .2});

\node (circaround) at (\circlex, \rlineymid - .3) [circle,draw=none,thick, minimum size=1cm] {};
												
\draw[ultra thick, ->] (circaround)  to [bend right = 25] (\ellbasex - 1.2*\majoraxis, \ellbasey - .3);

\draw[very thick, fill = dblue] (\ellbasex - \majoraxis, \ellbasey) arc (-180:180:{\majoraxis} and {\minoraxis});
\node (baseone) at (\ellbasex - \majoraxis * .27, \ellbasey - \minoraxis * 1.25) {};
\node (basetwo) at (\ellbasex + \majoraxis * .27, \ellbasey + \minoraxis * 1.25) {};
\draw[very thick, red] (baseone) -- (basetwo);

\draw[draw = none, fill = green] (\ellbasex - \majoraxis, \ellbasey + 1) arc (180:230:{\majoraxis} and {\minoraxis}) arc (-90:-47.4:.75) node (cylh) {} -- ++(0.95, 0.875) arc(-47.4:-90:.75) arc(100:180:{\majoraxis} and {\minoraxis}) -- cycle;
\draw[draw = none, fill = dgreen] (cylh)  arc (-47.4:0:.75) -- ++(0.95, 0.875) arc(0:-47.4:.75) -- cycle;

\draw[very thick] (\ellbasex - \majoraxis, \ellbasey + 1) arc (180:230:{\majoraxis} and {\minoraxis}) arc (-90:0:.75);
\draw[very thick] (\ellbasex - \majoraxis, \ellbasey + 1) arc (180:100:{\majoraxis} and {\minoraxis}) arc (-90:0:.75);

\draw[draw = none, fill = green] (\ellbasex + \majoraxis, \ellbasey + 1.1) arc (0:50:{\majoraxis} and {\minoraxis}) arc (270:180:.75) -- ++(-0.95, -0.875) arc(180:270:.75) arc (-80:0:{\majoraxis} and {\minoraxis}) -- cycle;

\draw[very thick] (\ellbasex +\majoraxis, \ellbasey + 1.1) arc (0:50:{\majoraxis} and {\minoraxis}) arc (270:180:.75);
\draw[very thick] (\ellbasex +\majoraxis, \ellbasey + 1.1) arc (0:-80:{\majoraxis} and {\minoraxis}) arc (270:180:.75);

\end{tikzpicture}
\caption{A $b$-function on a $b$-manifold with a logarithmic singularity near the exceptional hypersurface.} \label{fig:L1}
\end{figure}

In other words, the space of $b$-functions corresponds to the functions on $M \backslash Z$ whose differential extends to a globally-defined closed $b$-form on $M$ whose $b$-cohomology class has zero smooth part.

A closed $b$-form of degree two which is nondegenerate as a section of $\Lambda^2 ({^b}T^*M)$ is called a {\bf $b$-symplectic form}. Dualizing a $b$-symplectic form gives a $b$-Poisson bivector, and vice versa; the structures are equivalent (\cite{Guillemin2012}) and we will use the terms $b$-symplectic manifold and $b$-Poisson manifold interchangeably. For any $b$-function on a $b$-symplectic manifold $(M, \omega)$ there is a (smooth) vector field $X_{f_n}$ defined by $\iota_{X_{f_n}}\omega = -df_n$ which we call the $b${\bf -Hamiltonian vector field} of $f_n$. A $\mathbb{T}^k$ action on a $b$-symplectic manifold $(M^{2n}, \omega)$ is called \textbf{$b$-Hamiltonian} if the fundamental vector fields are the $b$-Hamiltonian vector fields of functions which Poisson commute. Such an action is called \textbf{toric} if $k = n$.

\paragraph{\bf The topology of $Z$}
For a given volume form $\Omega$ on a Poisson manifold $M$ the associated {\bf modular vector field} $u_{\mod}^\Omega$ is defined as the following derivation:
 \[
C^\infty(M) \to \R : \, f\mapsto \frac{\SL_{X_f}\Omega}{\Omega}.
\]
It can be shown (see for instance \cite{Weinstein2}) that this is indeed a derivation and, moreover, a Poisson vector field. Furthermore, for different choices of volume form $\Omega$, the resulting vector fields only differ by a Hamiltonian vector field.

The topology of the exceptional hypersurface $Z$ of a $b$-symplectic structure has been studied in \cite{Guillemin2011} and \cite{Guillemin2012}. In \cite{Guillemin2011} it was shown that if $Z$ is compact and connected, then it is the mapping torus of any of its symplectic leaves $\SL$ by the flow of the any choice of modular vector field $u$:
\[
 Z = (\SL \times [0,k])/_{(x,0)\sim (\phi(x),k)},
\]
where $k$ is a certain positive real number and $\phi$ is the time-$k$ flow of $u$. In particular, all the symplectic leaves inside $Z$ are symplectomorphic.

In the transverse direction to the symplectic leaves, all the modular vector fields flow with the same speed. This allows the following definition:

\begin{definition}[Modular period]\label{def:modperiod}
 Taking any modular vector field $u_{\mod}^\Omega$, the {\bf modular period} of $Z$ is the number $k$ such that $Z$ is the mapping torus
$$ Z = (\SL \times [0,k])/_{(x,0)\sim (\phi(x),k)},$$
and the time-$t$ flow of  $u_{\mod}^\Omega$ is translation by $t$ in the $[0, k]$ factor above.
\end{definition}

\subsection{Basics on KAM theory}\label{sec:kam}

\begin{figure}[!b]
\begin{center} Perturbation
\includegraphics[scale=0.55]{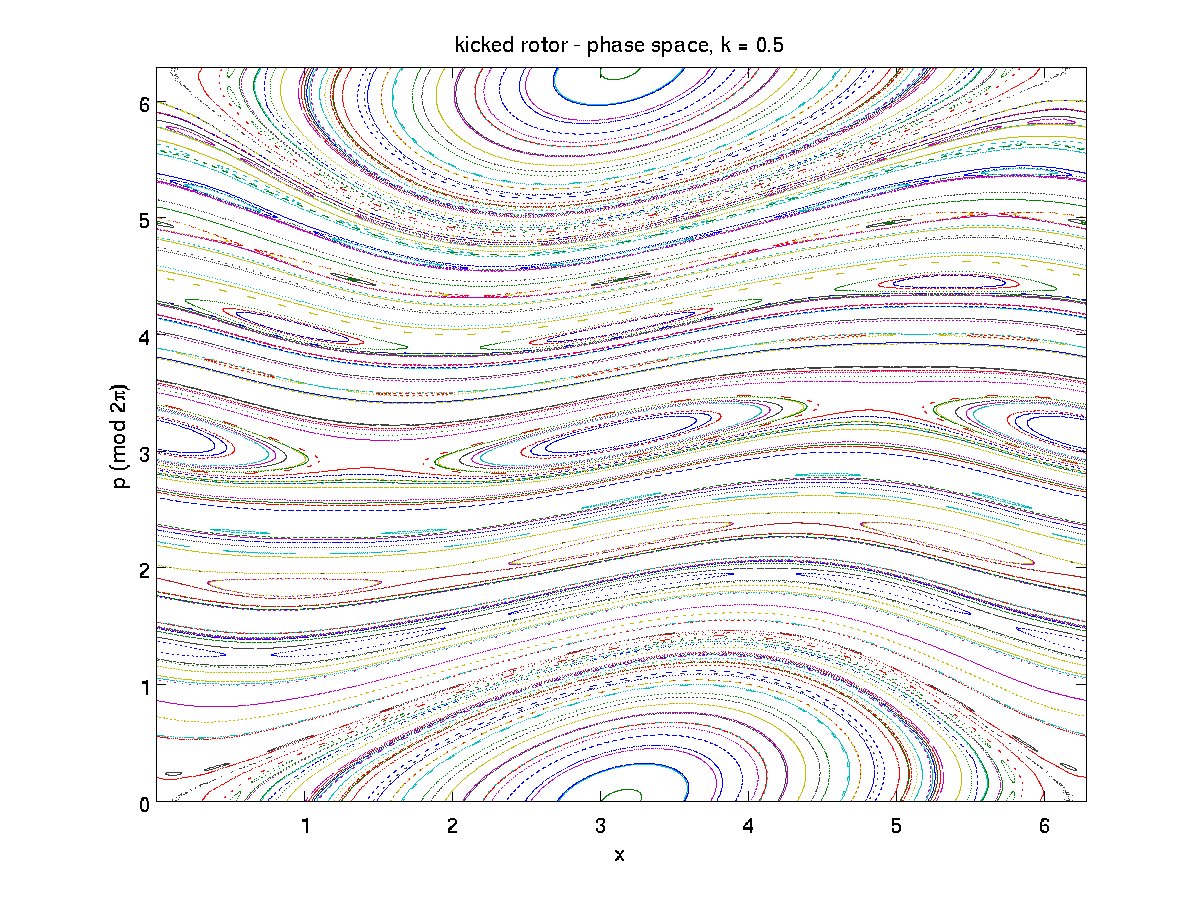}
\end{center}
\caption{The phase space of the kicked rotor: This system is a discrete approximation of the perturbed system arising from the mathematical pendulum (phase space $\S^1 \times \R$) to which a small periodic t-dependent vertical force is added. The variable $x \in \S^1$ is the angle coordinate, the variable $p \in \R$ is the conjugated momentum. The parameter $k$ measures the size of the perturbation.}
\end{figure}

The classical KAM theorem -- named after Kolmogorov, Arnold and Moser -- is a statement about the stability of integrable Hamiltonian systems on symplectic manifolds: Roughly speaking, it implies that ``most'' Liouville tori of an integrable system persist under sufficiently small perturbations of the Hamiltonian function of the system. In physics, the Hamiltonian function $H$ (or ``Hamiltonian'' for short) is a function on a symplectic manifold $M$ which determines the evolution of any other function $g$ on $M$ via the equation
$$ \dot g = \{g,H\}.$$
The pair of a symplectic manifold and a Hamiltonian function is called a {\bf Hamiltonian (dynamical) system}. \\

An integrable system on a symplectic manifold $M$ together with a Hamiltonian function is called an {\bf integrable Hamiltonian system} if $\{f_i,H\}=0$ for all integrals $f_i$. Physically, the last condition corresponds to the fact that the integrals represent constants of motion. From the action-angle coordinate theorem we know that the manifold is semilocally around a Liouville torus symplectomorphic to the product $\T^n \times B^n$ with the standard symplectic structure, with coordinates $(\varphi, y)$, and with the integrals $\{y_1, \dots, y_n\}$ being the components of the projection to the $B^n$ component. Restricting to such a neighborhood, we can write their equations of motion explicitly:
\begin{align}\label{ham_sys}
\begin{split}
    \dot { \varphi} &= \frbd{ y} H ( \varphi, y),\\
    \dot { y} &= -\frbd{ \varphi} H ( \varphi, y).
 \end{split}
\end{align}
Here, the expressions in both lines are maps from $M$ to $\R^n$. \\

In particular, if we consider the requirement $\{y_i,H\}=0$ means that $H$ is independent of the angle coordinates $\varphi$. Therefore, we can write
$$H(\varphi, y) = h(y)$$
for some function $h:B^n \to \R$. This system evolves in a very simple way:

\begin{align}\label{ham_sys_aa_coo}
    \begin{split}
       &\dot { \varphi} = \frbd{ y}H( \varphi, y)=: \omega(y)\\
       &\dot {y} = 0.
    \end{split}
\end{align}
and has solutions of the form
\begin{equation}\label{sol_aa}
 y(t) = y_0,\qquad \varphi(t)= {\varphi_0} +  \omega (y_0) t,
\end{equation}
i.e. the angle coordinates wrap around the torus $\T^n \times \{y_0\}$ in a linear fashion. The components of $\omega(y_0)$ are {\it rationally dependent} if
$$ \omega \cdot k := \sum_{i=1}^n \omega_i k_i = 0 \qquad \text{for some }k\in \Z^n \without{0} .$$
Otherwise, if the components of $\omega(y_0)$ are rationally independent, the motion is called {\it quasi-periodic} and the resulting trajectory fills the torus densely. In KAM theory a particular kind of rationally independent frequency vectors are of interest --- so-called {\it strongly non-resonant} or {\it Diophantine} frequency vectors:

\begin{definition}[Diophantine condition]
 An $n$-tuple $\omega \in \R^n$  is called {\it Diophantine} if there exist $L,\gamma >0$ such that
\begin{equation}\label{cond_dioph}
 |  \omega \cdot k|  \ge  \frac{L}{|k|^\gamma}\qquad \text{for all } k \in \Z^n\without{0},
\end{equation}
where $|k|:= \sum_{i=1}^n |k_i|$.
\end{definition}

Not only do rationally dependent resp. independent frequency vectors result in very different kinds of motion along the tori, they also behave very differently under {\it perturbations} of the Hamiltonian system. Indeed, it can be shown that the tori corresponding to rationally dependent frequency vectors -- so-called resonant tori -- are generically destroyed by arbitrarily small perturbations. In contrast, strongly non-resonant tori survive under sufficiently small perturbations. That is, there is a symplectomorphism on a neighborhood of the torus taking the perturbed trajectory to the linear flow on a torus with unchanged frequency vector. The precise conditions are stated in the following theorem:

\begin{theorem}[KAM]\label{th:kam}
  Let $H( \varphi,  y) = h( y)$ be an analytic function on $\T^n \times B^n$ with frequency map $ \omega( y):= \frbd{ y} h( {y})$. If $ {y_0} \in B^n$ has Diophantine frequency vector $ \omega := \omega(y_0)$ and if the \emph{non-degeneracy condition} holds:
\begin{equation}\label{nondegenerate}
 \det \frbd{ y}  \omega( {y_0}) \neq 0,
\end{equation}
then the torus $\T^n \times \{ {y_0}\}$  persists under sufficiently small perturbations of $H$. That is, if $P$ is any analytic function on $\T^n \times B^n$ and $\epsilon>0$ sufficiently small, the perturbed system
  $$ H_\epsilon =H + \epsilon P$$
admits an invariant torus $\mathcal{T}$ close to $\T^n \times \{ {y_0}\}$.

Moreover, the flow $\gamma^t$  of the perturbed system on $\mathcal{T}$ is conjugated via a diffeomorphism $\psi:\T^N\to \mathcal{T}$ to the linear flow with frequency vector $ \omega$ on $\T^n$, i.e.
$$\psi^{-1} \circ \gamma^t \circ \psi ( {\varphi_0}) =  {\varphi_0} +  \omega t.$$
\end{theorem}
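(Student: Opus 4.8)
The statement is the classical KAM theorem, so the plan is to reproduce the standard Kolmogorov--Arnold iterative scheme (a quadratically convergent Newton iteration in function space) rather than to invent anything new; I would then cite the analytic estimates as routine. Working in the action-angle coordinates $(\varphi, y)$ on $\T^n \times B^n$ furnished by Theorem~\ref{thm:action-angle_intro}, the goal is to construct an analytic symplectomorphism $\psi$, defined near $\T^n \times \{y_0\}$, that transforms $H_\epsilon = h(y) + \epsilon P(\varphi,y)$ into a Hamiltonian that is independent of the angles up to a flat remainder at the torus, so that $\T^n \times \{y_0\}$ becomes invariant with linear flow of frequency $\omega = \omega(y_0)$. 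The whole scheme is organized so that if the perturbation has size $\epsilon$ on an analyticity strip of width $\rho$, a single step produces a new perturbation of size $O(\epsilon^2)$ on a slightly narrower strip of width $\rho - \delta$.

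The engine of each step is the \emph{homological equation}. Expanding $P$ in a Fourier series in $\varphi$ and seeking a generating function (or, in the Lie-series formulation, a Hamiltonian) $S$ whose time-one symplectic flow cancels the angle-dependent part of $P$ to first order, one is led to solve $\omega \cdot \frbd{\varphi} S = \overline{P} - P$, whose formal solution is
\begin{equation*}
  S = \sum_{k \in \Z^n \without{0}} \frac{\hat{P}_k}{i\,\omega\cdot k}\, e^{i k\cdot\varphi}.
\end{equation*}
This is precisely where the \textbf{small divisors} $\omega\cdot k$ appear, and it is exactly here that the Diophantine condition~\eqref{cond_dioph} is used: the lower bound $|\omega\cdot k| \ge L/|k|^\gamma$ combines with the exponential decay $|\hat{P}_k| \lesssim e^{-|k|\rho}$ of the Fourier coefficients of an analytic function to show that $S$ converges and is analytic on the shrunken strip, with a quantitative bound that loses only a factor polynomial in $\delta^{-1}$. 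Alongside this, the \textbf{non-degeneracy condition}~\eqref{nondegenerate} is invoked through the implicit function theorem to readjust the base point $y_0$ at each step, keeping the frequency of the surviving torus pinned to the prescribed value $\omega$ as the actions drift; this is what guarantees that the limiting torus carries the \emph{unchanged} frequency vector claimed in the theorem.

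The main obstacle, and the genuine content of the proof, is the \textbf{convergence of the infinite composition} of these symplectomorphisms. One must track three competing effects: the superexponential decay of the perturbation ($\epsilon \mapsto \epsilon^{2^j}$ at the $j$-th step), the geometric shrinkage of the analyticity strips $\rho_j \downarrow \rho_\infty > 0$, and the polynomial blow-up in $\delta_j^{-1}$ coming from the small divisors and the Cauchy estimates used to bound derivatives. I would choose the sequence $\delta_j$ (typically $\delta_j \sim \delta_0 2^{-j}$) so that the quadratic gain dominates the accumulated losses, so that $\sum_j \delta_j < \rho_0$ leaves a nontrivial limiting domain, and so that the estimates close under the inductive hypothesis $\epsilon_{j+1} \le C\, \delta_j^{-\tau} \epsilon_j^2$. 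Establishing this inductive step and the resulting convergence of $\psi = \lim_j \psi_1 \circ \cdots \circ \psi_j$ to an analytic symplectomorphism is the technical heart; once it is in place, the image $\mathcal{T} = \psi(\T^n \times \{y_0\})$ is the desired invariant torus, and the conjugacy $\psi^{-1}\circ \gamma^t \circ \psi(\varphi_0) = \varphi_0 + \omega t$ follows because the transformed Hamiltonian is angle-independent to infinite order along $\mathcal{T}$. I would finish by remarking that every individual estimate is classical, so the proof can be compressed to a reference to the standard treatments of the analytic KAM theorem.
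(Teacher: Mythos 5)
Your outline is the standard direct proof of the classical KAM theorem (Newton iteration, homological equation with small divisors controlled by \eqref{cond_dioph}, frequency pinning via \eqref{nondegenerate}, convergence of the composed symplectomorphisms), and as a sketch it is essentially correct. The paper, however, does not prove Theorem~\ref{th:kam} this way --- in fact it gives no iteration at all. It states Kolmogorov's theorem (Theorem~\ref{thm:kolmogorov}) as the black box: a Hamiltonian in non-degenerate Kolmogorov normal form (Definition~\ref{def:kolm_normal_form}) with Diophantine frequency can be restored to Kolmogorov normal form with the \emph{same} frequency after an analytic perturbation, by a symplectomorphism $\epsilon$-close to the identity. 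Theorem~\ref{th:kam} is then declared an easy corollary: Taylor-expanding $h$ at $y_0$ puts $H$ in Kolmogorov normal form with $\omega=\omega(y_0)$ and $\langle \partial^2_y Q(\cdot,0)\rangle = \frbd{y}\omega(y_0)$, so \eqref{nondegenerate} is exactly the non-degeneracy of that normal form, and the invariant torus is $\psi(\T^n\times\{0\})$. All of the machinery you describe is hidden inside the unproved Theorem~\ref{thm:kolmogorov}. This factorization is not cosmetic: it is the Kolmogorov normal-form statement, not Theorem~\ref{th:kam} itself, that gets reused in Section~\ref{sec:kamb} to prove the paper's new $b$-KAM theorem, where the symplectic Kolmogorov theorem is applied leafwise in the variables $(\tilde\varphi,\tilde y)$. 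One small imprecision in your write-up: you should not aim to make the transformed Hamiltonian angle-independent ``up to a flat remainder'' at the torus --- the scheme only removes the angle dependence of the constant and linear-in-$y$ parts, leaving a remainder $Q_\ast(\varphi,y)$ that is quadratic in $y$ but still angle-dependent; that already forces $\dot y = -\frbd{\varphi}Q_\ast = 0$ on $\{y=0\}$ and $\dot\varphi=\omega$ there, which is all the conclusion requires.
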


 The basis is Kolmogorov's theorem, which we state below and whose proof is the heart of KAM theory. It tells us that we can ``correct'' a sufficiently small perturbation of a certain type of Hamiltonian via a symplectomorphism close to the identity. The KAM Theorem follows as an easy corollary. Before stating Kolmogorov's Theorem, we introduce the following

\begin{definition}[Kolmogorov normal form]\label{def:kolm_normal_form}
 Let $H:\T^n \times B^n \to \R$ be an analytic function. We say that $H$ is in {\bf Kolmogorov normal form} (with frequency vector $\omega$) if it is of the form
\begin{equation}\label{kolm_normal_form}
 H( \varphi, y)= E + \omega \cdot y + Q(\varphi, y)
\end{equation}
for some $E\in \R$, $\omega\in \R^n$ and a function $Q:\T^n \times B^n \to \R$ which is quadratic in $y$, meaning that
$$ Q( \varphi, 0)= 0,\quad \frbd{y} Q(\varphi, 0)= 0.$$
The Kolmogorov normal form is called {\bf non-degenerate} if
$$\det \frac{1}{(2\pi)^n} \int_{\T^n}  \partial^2_{y} Q( \varphi, 0) d\varphi =: \det \langle \partial^2_{y} Q(\cdot, 0) \rangle \neq 0. $$
\end{definition}

\begin{theorem}[Kolmogorov]\label{thm:kolmogorov}
 Let $H:\T^n \times B_r^n \to \R$ be in non-degenerate Kolmogorov normal form~\eqref{kolm_normal_form} with Diophantine frequency vector $ \omega$.
Consider any (analytic) perturbation $P:\T^n \times B_r^n \to \R$. Then there exists $\epsilon_0$ such that for all $0<\epsilon<\epsilon_0$ there exists a symplectomorphism
$$\psi: \T^n \times B^n_{r_\ast} \to \T^n \times B_r^n $$
for some $0<r_\ast<r$ which transforms the perturbed Hamiltonian $ H_\epsilon:= H + \epsilon P $
into Kolmogorov normal form:
$$(H_\epsilon \circ \psi) ( \varphi,  y) = E_\ast +  \omega \cdot  y + Q_\ast( \varphi,  y)$$
where\footnote{For functions on $\T^n \times B^n$ the norm $\norm{\cdot}$ denotes the supremum norm; for maps $\T^n \times B^n \to \T^n \times B^n$ we regard the target space as a subset of $\R^{2n} \times \R^n$ and use the supremum norm with respect to the Euclidean norm.}
$\norm{\psi - \id}, |E_\ast - E|$ and $\norm{Q- Q_\ast}$ are of order $\epsilon$. In particular $\psi$ is $\epsilon$-close to the identity, i.e. there exists a constant $C$ such that $\norm{\psi - \id} < C \epsilon$. \\
\end{theorem}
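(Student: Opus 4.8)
The plan is to prove this by Kolmogorov's quadratically convergent iteration scheme, a Newton-type method that is the technical heart of KAM theory. I will work in scales of functions that are analytic on complex neighborhoods $\T^n_s\times B^n_r$ (a strip of width $s$ about the real torus times a ball of radius $r$), measuring everything in the sup norm $\norm{\cdot}$. Writing the perturbed Hamiltonian as $H_\epsilon = E + \omega\cdot y + Q(\varphi,y) + \epsilon P(\varphi,y)$ and expanding the perturbation in $y$ about $y=0$ as $\epsilon P(\varphi,y) = p_0(\varphi) + \la p_1(\varphi),y\ra + O(y^2)$, I observe that the $O(y^2)$ part harmlessly enlarges $Q$, so the only obstruction to Kolmogorov normal form is the pair $(p_0,p_1)$. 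The goal of a single step is to remove these $O(\epsilon)$ terms by a symplectomorphism close to the identity, leaving a new perturbation that is quadratically small.

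For the single step I look for $\psi$ as the time-one flow of the Hamiltonian vector field of a generating function $\chi(\varphi,y) = a(\varphi) + \la b(\varphi),y\ra$, composed with a translation $y\mapsto y+\xi$ of the actions. Using the Lie series $H_\epsilon\circ\psi = H_\epsilon + \{H_\epsilon,\chi\} + \tfrac12\{\{H_\epsilon,\chi\},\chi\}+\cdots$ and the identity $\{\omega\cdot y,\chi\} = -\partial_\omega\chi$, where $\partial_\omega:=\sum_i \omega_i\frbd{\varphi_i}$ is the derivative along $\omega$, the cancellation of the oscillating parts of $p_0$ and $p_1$ reduces to two \emph{homological equations}
$$ \partial_\omega a = p_0 - \la p_0\ra, \qquad \partial_\omega b = q_1 - \la q_1\ra, $$
where $q_1$ collects $p_1$ together with the $O(\epsilon)$ cross term coming from $\{Q,a\}$. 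These are solved term by term in Fourier series: for zero-average $g=\sum_{k}\hat g_k e^{ik\cdot\varphi}$ the solution is
$$ u = \sum_{k\in\Z^n\without{0}} \frac{\hat g_k}{i\,\omega\cdot k}\, e^{ik\cdot\varphi}. $$
Here the small divisors $\omega\cdot k$ appear, and the Diophantine condition $|\omega\cdot k|\ge L/|k|^\gamma$ is exactly what bounds the solution on a strip narrowed by $\delta$, giving an estimate of the form $\norm{u}_{s-\delta}\le C\,\delta^{-(n+\gamma)}\norm{g}_s$.

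The two averages that the homological equations cannot absorb are handled separately. The constant $\la p_0\ra$ is harmless: it is absorbed into the energy $E$. The average $\la q_1\ra$ is the dangerous term, since it would shift the frequency off $\omega$; this is precisely where the non-degeneracy hypothesis $\det\la\partial^2_y Q(\cdot,0)\ra\neq 0$ enters. The translation $y\mapsto y+\xi$ changes the linear term by $\la\partial^2_y Q(\cdot,0)\ra\,\xi$ to leading order, so by non-degeneracy I can solve for a unique $\xi$ of the size of the perturbation that cancels $\la q_1\ra$ and restores the frequency exactly to $\omega$. After carrying out the generating transformation and the translation, all $O(\epsilon)$ obstruction terms are gone, the normal-form data $(E,Q)$ change only by $O(\epsilon)$ while $\omega$ is preserved, and the new perturbation $p'$ obeys the quadratic one-step estimate
$$ \norm{p'}_{s-\delta} \le C\,\delta^{-\tau}\,\norm{p}_s^2 $$
for an exponent $\tau=\tau(n,\gamma)$.

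The proof concludes with the iteration. I fix strips $s_j\downarrow s_\infty>0$ and radii $r_j\downarrow r_\ast>0$ with geometric losses $\delta_j = s_j - s_{j+1}\sim 2^{-j}$, set $\epsilon_0=\norm{\epsilon P}$, and read off the recursion $\epsilon_{j+1}\le C\,\delta_j^{-\tau}\epsilon_j^2$. The standard super-convergence lemma then shows that if $\epsilon_0$ is below a threshold $\epsilon_0$ determined by $C,\tau$ and the domain geometry, then $\epsilon_j\to 0$ super-exponentially, since the quadratic gain defeats the polynomially growing factors $\delta_j^{-\tau}$. The composition $\psi=\lim_{J\to\infty}\psi_1\circ\cdots\circ\psi_J$ converges on $\T^n\times B^n_{r_\ast}$ to a symplectomorphism conjugating $H_\epsilon$ into Kolmogorov normal form with unchanged $\omega$; summing the telescoping per-step bounds, each dominated by its first $O(\epsilon)$ contribution, gives $\norm{\psi-\id},\,|E_\ast-E|,\,\norm{Q-Q_\ast}=O(\epsilon)$, and in particular $\norm{\psi-\id}<C\epsilon$. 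The main obstacle I anticipate is the tension between the small divisors and the loss of analyticity width: each homological solution costs a factor $\delta^{-(n+\gamma)}$, and these losses accumulate, so a naive linear iteration diverges. It is exactly the quadratic Newton scheme, together with careful bookkeeping of the shrinking domains ensuring $\sum_j\delta_j<\infty$, that makes the convergence work, and this balance is the delicate core of the argument.
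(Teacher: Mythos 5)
The paper does not actually prove Theorem~\ref{thm:kolmogorov}: it is quoted as the classical result of Kolmogorov that underlies KAM theory, and is used later as a black box in the proof of the $b$-KAM theorem. So there is no in-paper argument to compare against; judged on its own, your sketch is a faithful outline of the standard proof. You have all the essential ingredients in the right places: the splitting of the perturbation into $p_0$, $p_1$ and a harmless $O(y^2)$ remainder; the Lie-series conjugation by a $\chi$ affine in $y$ plus an action translation; the homological equations solved in Fourier series with the Diophantine condition controlling the small divisors at the cost of a loss $\delta^{-(n+\gamma)}$ of analyticity width; the use of the non-degeneracy $\det\langle\partial_y^2 Q(\cdot,0)\rangle\neq 0$ to choose $\xi$ cancelling $\langle q_1\rangle$ and pinning the frequency to $\omega$; and the quadratic one-step estimate fed into the super-convergent iteration with $\sum_j\delta_j<\infty$, whose telescoping sums give the $O(\epsilon)$ bounds on $\psi-\id$, $E_\ast-E$ and $Q_\ast-Q$. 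Two small points to tidy up: you use the symbol $\epsilon_0$ both for the size of the initial perturbation and for the smallness threshold, which is a genuine clash; and in the iteration you should note explicitly that the non-degeneracy of $\langle\partial_y^2 Q_j(\cdot,0)\rangle$ (with a uniform bound on its inverse) persists at every step because $Q_j$ drifts only by the summable amounts $O(\epsilon_j)$ --- without this the frequency-restoring translation cannot be performed at later stages. Neither issue affects the correctness of the overall scheme.
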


\begin{remark}
  A concrete choice of $\epsilon_0$ in Theorem \ref{thm:kolmogorov} can be given, but it is not important for the purpose of this paper.
  \end{remark}
  \begin{remark} Note that, in the new Kolmogorov normal form $K_\epsilon$, the constant and quadratic parts $E_\ast$ resp. $Q_\ast$ are slightly different from the original $E$ resp. $Q$, but the frequency vector $\omega$ is the same.

\end{remark}

In Section \ref{sec:kamb} we will present a KAM result for the $b$-symplectic case.



\section{Integrable systems on $b$-symplectic manifolds}
An integrable system on a $b$-symplectic manifold $(M^{2n}, \omega)$, defined according to the standard definition \ref{def:intsyspoisson} for Poisson manifolds, will only define a distribution of rank at most $2n-2$ on the exceptional hypersurface $Z$. The vector field $\frbd{\theta_n}$ in the expression~\eqref{pi_in_aacoo} is actually not a Hamiltonian vector field of the last angle coordinate $\sigma_n$ (which is a defining function of $Z$), but it \emph{is} a $b$-Hamiltonian vector field of $\log |\sigma_n|$. To obtain an action-angle coordinate theorem for integrable systems on a $b$-symplectic manifold as in Equation~\eqref{pi_in_aacoo}, we allow our integrals of motion to be $b$-functions.
\begin{definition}[{\bf $b$-integrable system}]\label{def:pbintegrable}  A  $b$-integrable system on a $2n$-dimensional $b$-symplectic manifold $(M^{2n},\omega)$ is a set of  $n$ pairwise Poisson commuting $b$-functions $F=(f_1,\ldots,f_{n-1},f_n)$ (i.e, $\{f_i,f_j\}=0$),  satisfying,
$df_1 \wedge \dots \wedge df_n$ is nonzero as a section of $\wedge^n  ({^b} T^*(M))$ on a dense subset of $M$ and on a dense subset of $Z$. We say that a point in $M$ is {\bf regular} if the vector fields $X_{f_1}, \dots, X_{f_n}$ are linearly independent (as \emph{smooth} vector fields) at it. \end{definition}

Notice that if a point on $Z$ is regular, then at least one of the $f_i$ must be non-smooth there.

On the set of regular points, the distribution given by $X_{f_1},\ldots, X_{f_n}$ defines a foliation $\SF$. We denote the integral manifold through a regular point $m\in M$ by $\SF_m$. If $\SF_m$ is compact, then it is an $n$-dimensional torus (also referred to as ``(standard) {\bf Liouville torus}''). Because the $X_{f_i}$ are b-vector fields and are therefore tangent to $Z$, any Liouville torus that intersects $Z$ actually lies inside $Z$.
Two ($b$-)integrable systems $F$ and $F'$ are called {\bf equivalent} if there is a map $\mu:\R^n \supset F(M) \to \R^n$ taking one system to the other: $F' = \mu \circ F$. We will not distinguish between equivalent integrable systems.
\begin{remark}
Near a regular point of $Z$, a $b$-integrable system on a $b$-symplectic manifold is equivalent to one of the type $F=(f_1,\ldots,f_{n-1},f_n)$, where $f_1,\ldots, f_{n-1}$ are $C^\infty$ functions and $f_n$ is a $b$-function. In fact, we may always assume that $f_n = c\log|t|$, where $c \in \mathbb{R}$ and $t$ is a global defining function for $Z$.
\end{remark}
\subsection{Examples of $b$-integrable systems on $b$-Poisson manifolds}\subsubsection{Integrable systems on manifolds with boundary}\label{manifoldswithboundary} Given a $b$-symplectic manifold $(M, \omega)$ and a component $W$ of $M \backslash Z$, $\restr{\omega}{W}$ is a classic symplectic form on $W$. The closure of $W$ is a manifold with boundary, and the asymptotics of $\restr{\omega}{W}$ near this boundary can been described in the following way, following \cite{NestandTsygan}. For each $p \in \partial W$, there is a neighborhood diffeomorphic to $\{(x_1, y_1, \dots, x_n, y_n) \mid x_1 \geq 0\}$ on which $\omega = \frac{1}{x_1}dx_1 \wedge dy_1 + \sum_{i > 1} dx_i \wedge dy_i$.

This observation enables us to study manifolds with boundary equipped with a symplectic forms in its interior, as long as these symplectic forms have the kind of asymptotics described above. By taking the double of such a manifold with boundary, the boundary becomes a hypersurface, and the symplectic form extends to a $b$-symplectic form on the double, and we may then apply the results of this paper.

For example, consider the upper hemisphere (including the equator) $H_+$ of $S^2$. The symplectic form $\frac{1}{h} dh \wedge d\theta$ defined on the interior of $H_+$ extends to a $b$-symplectic form on the double of $H_+$, which is $S^2$. This example can be generalized to $H_+ \times M$, where $M$ is any symplectic manifold, endowed with the product symplectic structure. Moreover, any integrable system on the interior of $H_+ \times M$ which has asymptotics compatible with those of $\omega$ near the boundary extends to a $b$-integrable system on the double $S^2 \times M$. For example, if $(f_1, \dots, f_n)$ is an integrable system on $M$, then the integrable system $(\log|h|, f_1, \dots, f_n )$ on $H_+ \times M$ extends to the $b$-integrable system $(\log|h|, f_1, \dots, f_n )$ on $S^2 \times M$.

\subsubsection{Restricted 3-body problem}
The restricted 3-body problem\footnote{Special thanks to Amadeu Delshams for explainig us the Elliptic Restricted Three Body Problem and revealing its connection to our Poisson structures.} is a simplified version of the general 3-body problem where one of the bodies is assumed to have negligible mass
(an ``asteroid'' or ``comet'' or ``small planet'') and hence the two other bodies, the primaries (``star and big planet''), move independently of it
according to Kepler's laws for the 2-body problem. Here we consider the planar restricted 3-body problem, where the massless body
lies in the same plane as the other two.  The time-dependent self-potential of the small body  is then given by
$$U(q,t)= \frac{1-\mu}{|q-q_1|} + \frac{\mu}{|q-q_2|},$$
where we assume that the masses of the star and big planet are normalized, $q_1=q_1(t)$ is the position of the star (``Sun'') with mass $1-\mu$
at time $t$ and $q_2=q_2(t)$ the position of the planet (``Jupiter'') with mass $\mu$. Typically it is assumed that the primaries
revolve along circles centered in their center of masses (the Circular Restricted Three Body Problem), and the dependence
with respect to time of the potential can be removed in a rotating frame of coordinates. In general, the primaries
move around their center of mass on elliptic orbits with some non-zero eccentricity (the Elliptic Restricted Three Body Problem).

The Hamiltonian of the system is given by
$$H(q,p,t)= p^2/2 - U(q,t),\quad (q,p) \in \R^2 \times \R^2,$$
where $p=\dot q$ is the momentum of the planet. \\

Because of the rotational symmetry of the problem, it is natural to introduce polar coordinates $(r,\alpha)$ to describe the problem.
For $q=(X,Y)\in \R^2\without{0}$  we consider polar coordinates the  $(r,\alpha) \in \R^+ \times \T$ (
$ X = r \cos \alpha, Y = r \sin \alpha$). We transform the momentum variables $p=(P_X, P_Y)$ in such a way that the change of coordinates
$$(X,Y, P_X, P_Y) \mapsto (r,\alpha, P_r=:y, P_\alpha=:G)$$
is canonical, i.e. the symplectic structure remains the same.

To study the behaviour at infinity, it is standard to introduce the so-called {\bf McGehee coordinates}
$(x,\alpha,y,G)$, where $$ r=\frac{2}{x^2}, \quad x \in \R^{+},$$
which can be then extended to the infinity  $x=0$.

This transformation is non-canonical (and because of this in the classical literature in celestial mechanics
the symplectic structure is then totally abandoned; an exception is~\cite{delshams}),
i.e. the symplectic structure changes and one can check that it is given for $x>0$ by
$$-\frac{4}{x^3} dx \wedge dy + d\alpha \wedge d G.$$
This extends naturally to a $b^3$-symplectic structure in the sense of \cite{scott} on $\R\times \T \times \R^2$.
The integrable two body problem for $\mu=0$ (`star plus asteroid'') on the initial system of coordinates,
where action-angle variables are given by the so-called Delaunay coordinates
(which give rise to a degenerate Hamiltonian which only depends essentially on one action),
gets transformed to a $b^3$-integrable system with the new structure.

\section{Action-angle  variables for $b$-Poisson manifolds}\label{sec:aacoo}

The goal of this section is to prove the existence of action-angle variables for $b$-Poisson manifolds (Theorem~\ref{thm:aa} below). In the following subsections, let $(M,\omega,F)$ be a $b$-integrable system on the $b$-symplectic manifold $(M,\omega)$ with critical hypersurface $Z$. We start by establishing a local normal form result for integrable systems.


\subsection{Darboux-Carath\'{e}odory Theorem}

The following result \emph {locally} extends a set of $n$ Poisson commuting functionally independent $b$-functions to a ``$b$-Darboux'' coordinate system:

\begin{theorem}\label{th:darboux-caratheodory}
 Let $(M^{2n}, \omega)$ be a $b$-symplectic manifold, $m$ be a point on the exceptional hypersurface $Z$, and $f_1,\ldots,f_{n}$ be $b$-functions, defined on a neighborhood of $m$, which Poisson commute and have the property that $X_{f_1}\wedge \dots \wedge X_{f_n}$ is a nonzero section of $\Lambda^n(^bTM)$ at $m$. Then there exist $b$-functions $(q_1, \dots, q_n)$ around $m$ such that
\[ \omega = \sum_{i=1}^{n} df_i \wedge dq_i.\]
and the vector fields $\{X_{f_i}, X_{q_j}\}_{i, j}$ commute.

Moreover, if $f_n$ is not a smooth function, i.e. $f_n = c\log|t|$ for some $c \neq 0$ and some local defining function $t$ of $Z$, then the $q_i$ functions can be chosen to be smooth functions for which $(f_1, \dots, f_{n-1}, t, q_1, \dots, q_n)$ are local coordinates around $m$.
\end{theorem}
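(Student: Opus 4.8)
The plan is to prove the $b$-analogue of the classical Darboux--Carath\'eodory theorem by running its usual proof inside the $b$-category, the only genuinely new point being that one conjugate direction must be constructed across the singular hypersurface $Z$. First I would record two facts that make the $b$-setting behave like the smooth one: by the discussion in Section~2.3 each $b$-Hamiltonian vector field $X_{f_i}$ is an honest \emph{smooth} vector field, and since $[X_{f_i},X_{f_j}]=-X_{\{f_i,f_j\}}=0$ the fields $X_{f_1},\dots,X_{f_n}$ pairwise commute. By hypothesis they are linearly independent at $m$ as sections of $^{b}TM$.

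Next I would straighten this frame: by the ($b$-)version of the simultaneous flow-box theorem the commuting, $^{b}TM$-independent fields $X_{f_1},\dots,X_{f_n}$ form part of a coordinate system in which $X_{f_i}=\partial/\partial u_i$. Solving the linear equations $X_{f_i}(q_j)=\delta_{ij}$ then produces conjugate ($b$-)functions $q_j=u_j+h_j$, determined up to a function $h_j$ of the transverse coordinates $u_{n+1},\dots,u_{2n}$; this already gives $\{f_i,f_j\}=0$ and $\{f_i,q_j\}=\delta_{ij}$.

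The correction step fixes the remaining brackets. By the Jacobi identity each $\{q_i,q_j\}$ is annihilated by every $X_{f_k}$, hence depends only on $u_{n+1},\dots,u_{2n}$, and one chooses the $h_j$ so that all $\{q_i,q_j\}$ vanish without disturbing $\{f_i,q_j\}=\delta_{ij}$; as in the classical case the solvability of these equations is the closedness of an associated $1$-form, which here follows from $d\omega=0$ (the Jacobi identity for $\Pi$) read in the complex of $b$-forms. With all brackets canonical we get $[X_{f_i},X_{q_j}]=-X_{\{f_i,q_j\}}=0$ and $[X_{q_i},X_{q_j}]=0$, so the full frame commutes; and since the $2n$ functions $(f_1,\dots,f_n,q_1,\dots,q_n)$ have standard Poisson brackets and their $b$-Hamiltonian fields frame $^{b}TM$, the $b$-Poisson bivector is the canonical one and dualizes to $\omega=\sum_{i=1}^{n} df_i\wedge dq_i$.

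For the \emph{moreover} I would work in the local normal form $\omega=\frac{dt}{t}\wedge dz+\sum_{j<n}dx_j\wedge dy_j$ adapted to the given defining function $t$, so that $f_n=c\log|t|$ has $X_{f_n}=c\,\partial_z$, smooth and nonvanishing. Here lies the one real obstacle: $^{b}TM$-independence at $m$ does \emph{not} by itself force ordinary independence of $X_{f_1},\dots,X_{f_n}$ along $Z$, since a $t\,\partial_t$-component --- nonzero in $^{b}TM$ but vanishing in $TM$ on $Z$ --- can absorb a dependence among the ordinary restrictions. Involutivity is exactly what removes it: $\{f_n,f_i\}=X_{f_n}(f_i)=c\,\partial_z f_i=0$ forces each smooth $f_i$ to be independent of $z$, so $X_{f_i}$ carries no $t\,\partial_t$-component and $X_{f_1},\dots,X_{f_n}$ are ordinarily independent near $m$. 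The flow-box may then be taken smooth, $q_n$ chosen as a smooth correction of $z/c$ and the remaining $q_j$ as smooth conjugates in the symplectic directions, so every $q_i$ is smooth; replacing the singular coordinate $f_n$ by the smooth $t$ (legitimate as $df_n=c\,dt/t$) shows $(f_1,\dots,f_{n-1},t,q_1,\dots,q_n)$ is a genuine smooth coordinate system at $m$. Away from $Z$ the whole statement reduces to the classical Darboux--Carath\'eodory theorem, so this control of independence and smoothness across $Z$ is the crux of the argument.
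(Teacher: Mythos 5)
Your strategy is genuinely different from the paper's: you straighten the commuting frame by a flow-box and then correct the conjugate coordinates, whereas the paper works dually, inductively building $b$-one-forms $\mu_i$ with $\mu_i(X_{f_j})=\delta_i^j$ and $\mu_i(X_{\mu_j})=0$, reading off $\omega=\sum df_i\wedge\mu_i$ from the resulting matrix of pairings, and proving each $\mu_i$ is closed by evaluating Cartan's formula on the frame $\{X_{f_j},X_{\mu_k}\}$; no flows are ever integrated. Your treatment of the ``moreover'' clause is essentially sound and in fact isolates the same key point as the paper: the identity $0=\{f_n,f_i\}=\frac{c\,dt}{t}(X_{f_i})$ kills the $t\,\partial_t$-component of $X_{f_i}$, so the frame is honestly independent in $TM$ and the conjugates can be taken smooth.

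However, there is a genuine gap in your proof of the \emph{first} (general) statement. You invoke ``the ($b$-)version of the simultaneous flow-box theorem'' to write $X_{f_i}=\partial/\partial u_i$ for coordinates $u_i$. This is impossible whenever some $X_{f_i}$ vanishes at $m$ as an ordinary vector field, and under the hypotheses of the first part this does happen: in the normal form $\omega=\frac{dt}{t}\wedge dz+\sum dx_j\wedge dy_j$, the smooth function $f_1=z$ has $X_{f_1}=-t\,\partial_t$, which is nonzero in $^bT_mM$ but vanishes identically on $Z$ as a section of $TM$, so no coordinate vector field can equal it near $m$ (its conjugate is genuinely the $b$-function $q_1=-\log|t|$). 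You correctly identify this $t\,\partial_t$-phenomenon later, but you only neutralize it in the special case where $f_n=c\log|t|$ and the remaining $f_i$ are smooth; in the general statement the $f_i$ are arbitrary commuting $b$-functions and the straightening step is unjustified. To repair this you would need a normal form for commuting $b$-vector fields that are independent only in $^bTM$ (straightening to a frame containing $t\,\partial_t$ rather than to coordinate fields), which is a nontrivial assertion requiring proof; the paper's dual construction of the coframe $\{\mu_i\}$ sidesteps the issue entirely, which is precisely what that approach buys.
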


\begin{proof}
For this proof, we will adopt the notation that for a $1$-form $\mu$, the vector field $X_{\mu}$ is the vector field satisfying $\iota_{X_{\mu}}\omega = -\mu$. We begin by inductively constructing a collection $\{\mu_1, \dots, \mu_n\}$ of 1-forms with the property that
\begin{align*}
\mu_i(X_{f_j}) &= \delta_i^j\\
\mu_i(X_{\mu_j}) &= 0 \hspace{1cm} \textrm{for} \ j < i.
\end{align*}
Assume that we have successfully constructed $\mu_j$ for $j<i$, and moreover that this construction satisfies
\[
0 \neq X_{f_1} \wedge \dots, X_{f_n} \wedge X_{\mu_1} \wedge \dots \wedge X_{\mu_{i-1}}
\]
(as a section of $\Lambda^*({^b}TM)$) locally near $m$, and consider the problem of constructing $\mu_i$. Let $P_i$ and $K_i$ be the subbundles (of ${^b}TM$ and ${^b}T^*M$ respectively) defined by
\begin{align*}
P_i &= \textrm{span}(\{X_{f_j}, X_{\mu_k}\}_{j \neq i, k < i})\\
K_i &= \textrm{ker}(P_i)
\end{align*}
The bundle $K_i$ is a codimension-$(n + i - 2)$ subbundle of $^bT^*M$. By the inductive hypothesis, $X_{f_i}$ is not contained in $P_i$, so there is a section $\mu_i$ of $K_i$ for which $\mu_i(X_{f_i}) = 1$. The fact that $P_i$ and $X_{f_i}$ are in the kernel of $df_i$, but $X_{\mu_i}$ is not, reveals that $X_{\mu_i} \notin \textrm{span}(P_i \cup X_{f_i})$, so
\[
0 \neq X_{f_1} \wedge \dots \wedge X_{f_n} \wedge X_{\mu_1} \wedge \dots \wedge X_{\mu_{i}},
\]
completing the induction.

Because the $df_i$ and $\mu_i$ are $b$-forms, the $X_{f_i}$ and $X_{\mu_i}$ are $b$-vector fields. Using the fact that the $f_i$ functions Poisson commute, and the properties of the $\mu_i$ constructed above,
\[
\left( \begin{array}{c c} \omega(X_{f_i}, X_{f_j}) & \omega(X_{\mu_i}, X_{f_j}) \\ \omega(X_{f_i}, X_{\mu_j}) & \omega(X_{\mu_i}, X_{\mu_j}) \end{array} \right) = \left( \begin{array}{c c} 0 & -I \\ I & 0 \end{array} \right),
\]
so $\omega = \sum_{i = 1}^n df_i \wedge \mu_i$. To check that the $\mu_i$ are closed (and therefore exact) in a neighborhood of $m$, we apply the Cartan formula for the exterior derivative to calculate that
\begin{align*}
d\mu_i (X_{f_j}, X_{f_k}) &= X_{f_j}(\mu_i(X_{f_k})) - X_{f_k}\mu_i(X_{f_j}) - \mu_i([X_{f_j}, X_{f_k}])\\
&= X_{f_j} \delta_i^k - X_{f_k} \delta_i^j - \mu_i(0) = 0\\
d\mu_i (X_{f_j}, X_{\mu_k}) &= X_{f_j}\mu_i(X_{\mu_k}) - X_{\mu_k}\mu_i(X_{f_j}) - \mu_i([X_{f_j}, X_{\mu_k}])\\
&= X_{f_j} (0) - X_{\mu_k} \delta_i^j - \mu_i(0) = 0\\
d\mu_i (X_{\mu_j}, X_{\mu_k}) &= X_{\mu_j}\mu_i(X_{\mu_k}) - X_{\mu_k}\mu_i(X_{\mu_j}) - \mu_i([X_{\mu_j}, X_{\mu_k}])\\
&= X_{\mu_j} (0) - X_{\mu_k} (0) - \mu_i(0) = 0.\\
\end{align*}
Letting $q_i$ be any local primitive of $\mu_i$ yields the first part of the result.

In the case when $f_n = c\log|t|$ is non-smooth, we can modify our inductive construction of the $\mu_i$ so that in addition to requiring that the $\mu_i$ be in $K_i$, we also insist that they be in $T^*M \subseteq {^bT^*M}$. To check that this restriction is consistent with the requirement that $\mu_i(X_{f_i}) = 1$, we must check that the kernel of $X_{f_i}$ is not identically equal to $T^*M$, i.e. $X_{f_i}$ does not vanish at $m$ when viewed as a section of $TM$. But this is clear from the fact that $X_{f_i}$ does not vanish at $m$ when viewed as a section of ${^b}TM$, and $0 = \{f_n, f_i\} = \frac{c dt}{t}(X_{f_i})$.\\

This proves that the $q_i$ can be chosen to be smooth functions. The fact that $\{X_{f_1}, \dots, X_{f_{n-1}}, X_t, X_{q_1}, \dots, X_{q_n}\}$ pairwise commute follows from the fact that $\{X_{f_i}, X_{q_j}\}_{i, j}$ do, so $(f_1, \dots, f_{n-1}, t, q_1, \dots, q_n)$ indeed are local coordinates.

\end{proof}

\subsection{Foliation by Liouville tori}

Suppose $m$ is a regular point of a $b$-integrable system and the Liouville torus is contained in the exceptional hypersurface. The following theorem shows that the $b$-integrable system is isomorphic to one of the form
\begin{equation}\label{eq:standardform}
(\T^n \times B^n,\omega,F :=(f_1,\ldots,f_{n-1}, f_n = \log|t|)),
\end{equation}
Here, ``isomorphic'' means that there is a $b$-symplectomorphism of a neighborhood of the torus with the $b$-manifold above that takes the integrals of motion to $F$.

\begin{proposition}\label{prop:standarddonut}
Let $m\in Z$ be a regular point of a $b$-integrable system $(M,\omega, F)$. Assume that the integral manifold $\SF_m$ through $m$ is compact (i.e. a torus $\T^n$). Then there exists a neighborhood $U$ of $\SF_m$ and a diffeomorphism
$$\phi:U\simeq \T^n \times B^{n},$$
which takes the foliation $\SF$ to the trivial foliation $\{\T^n\times \{b \}\}_{b\in B^n}$.
\end{proposition}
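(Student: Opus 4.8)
The plan is to follow Duistermaat's strategy via the joint flow of the commuting Hamiltonian vector fields: this produces a locally defined $\R^n$-action whose orbits are the leaves of $\SF$, and the desired trivialization is then assembled from a transversal disk together with a smooth basis of the period lattice of this action.

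First I would record the infinitesimal picture. Since the $f_i$ Poisson commute, $[X_{f_i},X_{f_j}]=X_{\{f_i,f_j\}}=0$, so the $X_{f_i}$ are $n$ pairwise commuting $b$-vector fields, and being $b$-vector fields they are tangent to $Z$. At the regular point $m$ they are linearly independent, and since regularity is an open condition and $\SF_m$ is compact, they remain independent on a neighborhood $V$ of the whole torus. Writing $\varphi^i_s$ for the flow of $X_{f_i}$, the joint flow $\Phi_t:=\varphi^1_{t_1}\circ\cdots\circ\varphi^n_{t_n}$ is well defined and independent of the ordering for $t$ in a neighborhood of $0\in\R^n$ (the flows commute), and defines a locally free local $\R^n$-action whose orbits are exactly the leaves of $\SF$. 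Restricted to the compact leaf $\SF_m$ the flows are complete, and the isotropy $\Lambda_m=\{t:\Phi_t(m)=m\}$ is a full-rank lattice because $\SF_m$ is compact of dimension $n$; this recovers $\SF_m\cong\T^n=\R^n/\Lambda_m$.

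Next I would set up the transversal and the period lattice. Choose an embedded $n$-disk $\Sigma\cong B^n$ through $m$ transverse to $\SF$, so $T_m\Sigma\oplus T_m\SF_m=T_mM$. For $\sigma\in\Sigma$ the isotropy $\Lambda_\sigma=\{t:\Phi_t(\sigma)=\sigma\}$ is the period lattice of the leaf through $\sigma$. Using properness of the action on a saturated neighborhood of the compact orbit, together with an escape-time/compactness argument guaranteeing $\Phi_t(\sigma)$ is defined for $t$ over a fundamental domain of $\Lambda_m$, the $\Lambda_\sigma$ assemble into a smooth $\Z^n$-lattice bundle; by the Implicit Function Theorem a chosen basis $\lambda_1(m),\dots,\lambda_n(m)$ of $\Lambda_m$ extends to smooth sections $\lambda_1(\sigma),\dots,\lambda_n(\sigma)$ on a (possibly smaller) $B^n$. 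With this basis I define
\[
  \Psi:\T^n\times B^n\to M,\qquad (\theta,\sigma)\longmapsto \Phi_{\theta_1\lambda_1(\sigma)+\cdots+\theta_n\lambda_n(\sigma)}(\sigma),
\]
which is well defined on $\T^n=\R^n/\Z^n$ since each $\lambda_i(\sigma)$ lies in the isotropy of every point of the orbit of $\sigma$. Its differential at $(0,m)$ carries the $\T^n$-directions onto $\mathrm{span}\{X_{f_i}(m)\}=T_m\SF_m$ and the $B^n$-directions isomorphically onto $T_m\Sigma$, hence is an isomorphism; shrinking $\Sigma$ and using compactness of $\T^n$, $\Psi$ is a diffeomorphism onto a saturated neighborhood $U$ of $\SF_m$. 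By construction $\Psi(\T^n\times\{\sigma\})$ is the leaf through $\sigma$, so $\phi:=\Psi^{-1}$ takes $\SF$ to the trivial foliation, proving the claim.

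The main obstacle is precisely the smooth dependence of the period lattice basis $\lambda_i(\sigma)$ in a \emph{full} neighborhood of $m$, i.e. across $Z$. This is where the standing assumption that $Z$ has trivial normal bundle is essential: the flow of $X_{f_n}$, the $b$-Hamiltonian field of $\log|t|$, supplies the transverse period, and if the normal bundle were nontrivial this period could fail to vary continuously across $Z$ — the M\"obius band example discussed above, where the orbit period jumps from $1$ on $Z$ to $2$ on nearby leaves, is exactly this failure. Once the period lattice is known to vary smoothly, the resulting torus fibration over the contractible disk $B^n$ is automatically trivial, so producing the global basis $\lambda_i$ and checking that $\Phi$ is defined on the required range of $t$ is the real content of the argument.
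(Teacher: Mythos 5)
Your argument is correct in outline, but it follows a genuinely different (and longer) route than the paper's. The paper's proof of Proposition~\ref{prop:standarddonut} rests on a one-line observation: since $f_n=\log|t|$ and each $X_{f_i}$ annihilates $f_n$, each $X_{f_i}$ also annihilates $t$ (away from $Z$ and hence, by continuity, everywhere), so the foliation $\SF$ coincides with the foliation by level sets of the \emph{smooth} map $\tilde F=(f_1,\dots,f_{n-1},t)$. The proposition then follows exactly as in Prop.~3.2 of \cite{Laurent-Gengoux2010}: an auxiliary Riemannian metric gives a projection $\psi:U\to\SF_m$ onto the compact fiber, and $\phi=\psi\times\tilde F$ is the required trivialization. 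No torus action and no period lattice are needed at this stage. You instead build the trivialization from the joint flow and a smoothly varying basis $\lambda_1(\sigma),\dots,\lambda_n(\sigma)$ of the period lattice; this is Duistermaat's uniformization, which the paper deliberately postpones to Step~1 of the proof of Theorem~\ref{thm:aa}, where it is needed anyway to produce the $\T^n$-action and the action coordinates. Your approach therefore proves the proposition and part of Step~1 simultaneously, at the cost of doing the harder construction first; the paper's approach buys a short proof here and a clean separation between ``the fibration is topologically trivial'' and ``the periods can be uniformized.''

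The one place where your sketch is thin is exactly the step you flag as ``the real content'': the smooth variation of $\Lambda_\sigma$ across $Z$. As written, the return equation $\Phi_\lambda(\sigma)=\sigma$ is $2n$ scalar equations in the $n$ unknowns $\lambda$, so the Implicit Function Theorem does not apply directly; one must first know that the transverse position of $\Phi_\lambda(\sigma)$ automatically agrees with that of $\sigma$ on the nose (not merely up to holonomy), so that only the $n$ leafwise equations remain and the Jacobian in $\lambda$ is the invertible matrix of the $X_{f_i}$. The clean way to guarantee this is to take the transverse coordinates to be $(f_1,\dots,f_{n-1},t)$: smooth, globally defined near $\SF_m$ (this is where the trivial normal bundle enters, via the global defining function $t$), constant along entire leaves, and independent. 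That is precisely the paper's device --- and once you have it, the Ehresmann-type argument already gives the proposition, so you could defer the lattice construction until it is actually needed. This is not a formality: in the M\"obius example the return equation near the period-one solution on $Z$ has \emph{no} solution off $Z$, so the IFT step is exactly what fails when the normal bundle is nontrivial. Your identification of the obstruction is right; the mechanism by which triviality of the normal bundle rescues the argument should be made explicit along these lines.
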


\begin{proof}
The proof is the same as in~\cite{Laurent-Gengoux2010} (Prop. 3.2), the only difference being that the last integral $f_n$ in our system $F=(f_1,\ldots, f_n)$ is a $b$-function, $f_n = \log|t|$ for some $t$. However, the foliation given by the ($b$-)Hamiltonian vector fields of $F$ is the same as the one given by the level sets of the smooth function $\tilde F:=  (f_1,\ldots,f_{n-1}, t)$. Then, as in~\cite{Laurent-Gengoux2010}, choosing an arbitrary Riemannian metric on $M$ defines a canonical projection $\psi: U \to \SF_m$. Setting  $\phi:= \psi \times \tilde F$ we have a commuting diagram
\begin{align}
  \begin{diagram}
    \node{U}\arrow{e,t,--}{\phi}\arrow{se,b}{\tilde F}\node{\T^n \times B^n}\arrow{s,r}{p}\\
    \node[2]{B^{n}}
  \end{diagram}
\end{align}
which provides the necessary isomorphism of $b$-integrable systems.
 \end{proof}

\vspace{3mm}
\begin{figure}
\centering

\begin{tikzpicture}

\pgfmathsetmacro{\sizer}{1.2}
\pgfmathsetmacro{\basept}{5}	

\pgfmathsetmacro{\xone}{0}
\pgfmathsetmacro{\xtwo}{1.5}
\pgfmathsetmacro{\xthree}{3}
\pgfmathsetmacro{\xfour}{4.5}
\pgfmathsetmacro{\xfive}{6}

\pgfmathsetmacro{\ymid}{4.25}
\pgfmathsetmacro{\ytop}{5.65}
\pgfmathsetmacro{\ybottom}{2.85}

\DrawFilledDonutops{\xone}{\ymid}{.55*\sizer}{1.2*\sizer}{-90}{dblue}{very thick}{white}
\DrawFilledDonutops{\xtwo}{\ymid}{.55*\sizer}{1.2*\sizer}{-90}{dblue}{very thick}{white}
\DrawFilledDonutops{\xthree}{\ymid}{.55*\sizer}{1.2*\sizer}{-90}{dblue}{very thick}{white}
\DrawFilledDonutops{\xfour}{\ymid}{.55*\sizer}{1.2*\sizer}{-90}{dblue}{very thick}{white}
\DrawFilledDonutops{\xfive}{\ymid}{.55*\sizer}{1.2*\sizer}{-90}{dblue}{very thick}{white}
	
\DrawDonut{\xone}{\ymid}{.55*\sizer}{1.2*\sizer}{-90}{black}{very thick}
\DrawDonut{\xtwo}{\ymid}{.55*\sizer}{1.2*\sizer}{-90}{black}{very thick}
\DrawDonut{\xthree}{\ymid}{.55*\sizer}{1.2*\sizer}{-90}{black}{very thick}
\DrawDonut{\xfour}{\ymid}{.55*\sizer}{1.2*\sizer}{-90}{black}{very thick}
\DrawDonut{\xfive}{\ymid}{.55*\sizer}{1.2*\sizer}{-90}{black}{very thick}

\draw[very thick, ->] (\xone, \ybottom - 0.3) -- +(0, -1);
\draw[very thick, ->] (\xtwo, \ybottom - 0.3) -- +(0, -1);
\draw[very thick, ->] (\xthree, \ybottom - 0.3) -- +(0, -1);
\draw[very thick, ->] (\xfour, \ybottom - 0.3) -- +(0, -1);
\draw[very thick, ->] (\xfive, \ybottom - 0.3) -- +(0, -1);

\draw[ultra thick] (\xone - 1, \ybottom - 1.5) -- (\xfive + 1, \ybottom - 1.5);

\end{tikzpicture}
\caption{Fibration by Liouville tori}
\end{figure}
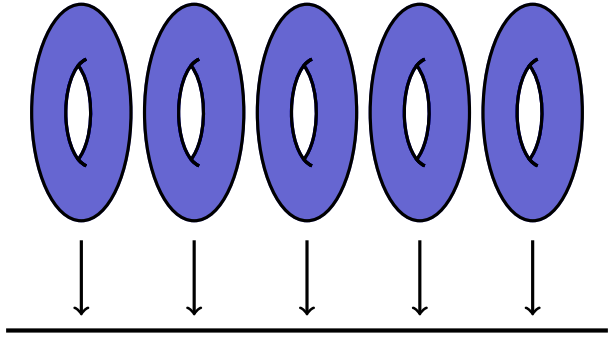

\vspace{3mm}

\subsection{The statement and the proof of the action-angle theorem}

\begin{theorem}\label{thm:aa}
 Let $(M, \omega, F = (f_1, \dots, f_{n-1}, f_n = \log|t|))$ be a $b$-integrable system, and let $m \in Z$ be a regular point for which the integral manifold containing $m$ is compact, i.e. a Liouville torus $\SF_m$. Then there exists an open neighborhood $U$ of the torus $\SF_m$ and coordinates $(\theta_1,\dots,\theta_n,\sigma_1,\dots,\sigma_{n}): U \to  \T^n \times B^n$ such that
\begin{equation}
        \omega|_U =\sum_{i=1}^{n-1} d\sigma_i \wedge d\theta_i  + \frac{c}{\sigma_n} d\sigma_n \wedge d\theta_n,
\end{equation}
where the coordinates $\sigma_1,\dots,\sigma_n$ depend only on $F$ and the number $c$ is the modular period of the component of $Z$ containing $m$.
\end{theorem}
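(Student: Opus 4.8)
The plan is to adapt Duistermaat's construction of a uniformizing torus action to the $b$-symplectic setting, keeping careful track of where the modular period enters. The whole argument is organized around the $n$ commuting $b$-Hamiltonian vector fields $X_{f_1},\dots,X_{f_n}$, with $X_{f_n}=X_{\log|t|}$; the decisive point is that, although $f_n=\log|t|$ blows up along $Z$, its $b$-Hamiltonian vector field is a genuinely smooth vector field, tangent to $Z$ and defined on all of $U$. First I would start from the trivial torus fibration $\phi:U\cong\T^n\times B^n$ of Proposition~\ref{prop:standarddonut} and use the complete commuting fields $X_{f_i}$ to define a locally free $\R^n$-action whose orbits are the fibers. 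For each fiber the isotropy is a lattice $\Lambda_b\subset\R^n$, and the standard argument (compactness of the fibers together with smooth dependence of the periods) shows these assemble into a smooth lattice subbundle of the trivial $\R^n$-bundle over $B^n$. A smooth frame $(\lambda_1,\dots,\lambda_n)$ of this bundle yields uniformizing vector fields $Y_i=\sum_j(\lambda_i)_j(F)\,X_{f_j}$ whose time-one flows generate a free $\T^n$-action with the Liouville tori as orbits.

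Second, and this is the step carrying the new content, I would identify the last generator. Since $f_1,\dots,f_{n-1}$ are smooth, the only singular contribution comes from $X_{f_n}=X_{\log|t|}$, which on $Z$ restricts to a modular vector field of the component of $Z$ containing $\SF_m$ (a short check in the local $b$-symplectic model); by Definition~\ref{def:modperiod} its flow around $Z$ is periodic with period equal to the modular period $c$. Hence the period-one generator in the transverse-to-leaf direction has the form $Y_n=c\,X_{f_n}+(\text{a combination of }X_{f_1},\dots,X_{f_{n-1}})$, so that the logarithmic singularity enters with coefficient exactly $c$.

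Third, I would show the $\T^n$-action is $b$-Hamiltonian and extract the action coordinates. Because the action preserves $\omega$, each $\iota_{Y_i}\omega$ is a closed $b$-one-form (Cartan's formula), hence has a $b$-function primitive $g_i$ with $\iota_{Y_i}\omega=-dg_i$. Evaluating $\iota_{Y_i}\omega$ on $X_{f_k}$ produces a linear combination of the brackets $\{f_j,f_k\}=0$, so $dg_i$ annihilates the fiber directions and each $g_i$ is constant along the fibers, i.e.\ a $b$-function of $F$ alone. For $i<n$ I set $\sigma_i:=g_i$, a smooth action coordinate. For $i=n$, the residue of $\iota_{Y_n}\omega$ (its singular part in the Mazzeo--Melrose splitting of Theorem~\ref{thm:mazzeomelrose}) equals $c$ by the previous step, so $g_n=c\log|t|+(\text{smooth function of }F)$; absorbing the smooth part into the defining function gives $g_n=c\log|\sigma_n|$ for a new defining function $\sigma_n$ of $Z$ depending only on $F$.

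Finally, I would fix a smooth section $s:B^n\to U$ and define angle coordinates $\theta=(\theta_1,\dots,\theta_n)$ by the flow-times of the $Y_i$ from $s$, so that $Y_i=\partial/\partial\theta_i$ and $(\theta_1,\dots,\theta_n,\sigma_1,\dots,\sigma_n)$ are coordinates $U\to\T^n\times B^n$. From $\iota_{\partial_{\theta_i}}\omega=-dg_i$ with $g_i=g_i(\sigma)$ one sees the tori are $b$-isotropic, since $\omega(Y_i,Y_j)=-dg_i(Y_j)=0$, so there are no $d\theta_i\wedge d\theta_j$ terms and $\omega=\sum_{i<n}d\sigma_i\wedge d\theta_i+d(c\log|\sigma_n|)\wedge d\theta_n+\pi^*\eta$ for a closed two-form $\eta$ on the contractible base $B^n$. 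Being exact, $\pi^*\eta$ can be removed by a fiber-preserving adjustment of the angle origins (equivalently, choosing $s$ isotropic), leaving precisely $\omega=\sum_{i<n}d\sigma_i\wedge d\theta_i+\frac{c}{\sigma_n}d\sigma_n\wedge d\theta_n$. I expect the main obstacle to be the second and third steps: establishing that the period lattice extends smoothly across $Z$ despite one integral blowing up there (salvaged by working with the smooth field $X_{\log|t|}$ instead of $\log|t|$), and pinning the residue of the $n$-th action one-form to the modular period $c$ rather than another multiple --- this identification, coming from $X_{\log|t|}|_Z$ being a modular vector field, is exactly what forces the coefficient $c$ in the normal form and separates this result from the classical symplectic case.
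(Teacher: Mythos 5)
Your proposal is essentially correct, and for the two steps that carry the new content of this theorem --- uniformization of the periods with the identification of the transverse period as the modular period $c$, and the proof that the resulting $\T^n$-action is $b$-Hamiltonian with last action $c\log|t|$ --- it matches the paper's argument (the paper outsources your observation that $X_{\log|t|}|_Z$ is, up to a leafwise Hamiltonian field, a modular vector field to Proposition 4 of \cite{Guillemin2013}, but the content is the same). Where you genuinely diverge is the final step: you build the angle coordinates \`a la Duistermaat, taking flow-times of the $Y_i$ from a section of the fibration and killing the residual closed basic two-form $\pi^*\eta$ using exactness over the contractible base (equivalently, a Lagrangian section), whereas the paper first proves a $b$-Darboux--Carath\'eodory normal form (Theorem~\ref{th:darboux-caratheodory}), applies it locally to $(\sigma_1,\dots,\sigma_{n-1},t)$ to get conjugate coordinates $q_i$, and then propagates these around the tori by the action, checking $\{\theta_i,\theta_j\}=0$ by flowing with the $X_{\sigma_k}$. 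Both routes work; yours stays closer to the classical symplectic proof and makes the role of the section explicit, while the paper's avoids the discussion of $\pi^*\eta$ at the price of establishing the Darboux--Carath\'eodory theorem first.

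Two points you gloss over would need a sentence each in a full write-up. First, invariance of $\omega$ under the uniformized action is not automatic: since $Y_i=\sum_j\lambda_i^j(F)\,X_{f_j}$ has non-constant coefficients, one only gets $\mathcal{L}_{Y_i}\omega=-\sum_j d\lambda_i^j\wedge df_j$, which is not visibly zero; the paper deduces $\mathcal{L}_{Y_i}\omega=0$ from $\mathcal{L}_{Y_i}\mathcal{L}_{Y_i}\omega=0$ together with the averaging lemma for periodic vector fields from \cite{Laurent-Gengoux2010}. Second, ``closed, hence has a $b$-function primitive'' is not valid as stated, because $H^1(\T^n\times B^n)\neq 0$; one must kill the smooth part of $[\iota_{Y_i}\omega]$ under the splitting of Theorem~\ref{thm:mazzeomelrose}, which follows from the observation --- which you in fact make one sentence later, but in the wrong logical order --- that $\iota_{Y_i}\omega$ annihilates the fiber directions and therefore integrates to zero over the loops generating $H_1$. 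With these two repairs, your argument goes through.
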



By Proposition (\ref{prop:standarddonut}), we may assume our $b$-integrable system is in the form in equation (\ref{eq:standardform}), where $f_1, \dots, f_{n-1}$ are smooth. Although the vector fields $X_{f_1}, \dots, X_{f_n}$ define a torus action on each of the Liouville tori, $\T^n \times \{b\}_{b \in B^n}$, individually, it is not guaranteed that their flow defines a torus action on all of $\T^n \times B^n$. Our next goal is to construct an equivalent $b$-integrable system whose fundamental vector fields \emph{do} define a $\T^n$ action on a neighborhood of $\T^n \times \{0\}$.

\paragraph{\emph{Step 1 -- uniformization of periods (constructing a $\T^n$ action).}}

Denote by $\Phi_{X_{f}}^{s}$ the time-$s$ flow of the ($b$-)Hamiltonian vector fields $X_{f}$ and
consider the combined flow of the ($b$-)Hamiltonian vector fields $X_{f_1}, \dots, X_{f_n}$,
\begin{align*}
\Phi :\R^n \times ( \T^n \times B^n ) &\to ( \T^n \times B^n )\\
 \big((s_1,\ldots,s_n),(x, b)\big) &\mapsto \Phi_{X_{f_1}}^{s_1}\circ\dots\circ \Phi_{X_{f_n}}^{s_n}((x, b)).
\end{align*}

Because the $X_{f_i}$ are complete and commute with one another, this defines an $\R^n$-action on $\T^n \times B^n$. When restricted to a single orbit $\T^n \times \{b\}$ for some $b \in B^n$, the kernel of this action is a discrete subgroup of $\R^n$, hence a lattice $\Lambda_{b}$, called the {\bf period lattice} of the orbit $\T_n \times \{b\}$. Since the orbit is compact, the rank of $\Lambda_b$ is $n$. \\

The lattice $\Lambda_b$ will in general depend on $b$. The process of \emph{uniformization} entails modifying the action so that $\Lambda_b = \mathbb{Z}^n$ for all $b$. For any $b \in B^{n-1}\times\{0\}$ and any $a_i \in \mathbb{R}$, the vector field $\sum a_i X_{f_i}$ on $\T_n \times \{b\}$ is the $b$-hamiltonian vector field of $a_n\log|t| + \sum_{i=1}^{n-1} a_i f_i$, where $f_i$ are smooth for $i < n$. However, by Proposition 4 of \cite{Guillemin2013}, if such a vector field is 1-periodic, then $a_n = \pm c$, where $c$ is the modular period of the component of $Z$ containing $m$. Therefore, for all $b \in B^{n-1} \times \{0\}$, the lattice $\Lambda_b$ is contained in $\mathbb{R}^{n-1} \times c\mathbb{Z} \subseteq \mathbb{R}^n$. To perform the uniformization, pick smooth functions
$$(\lambda_1, \lambda_2, \dots, \lambda_n): B^n \rightarrow \mathbb{R}^n$$
such that
\begin{itemize}
\item $(\lambda_1(b), \lambda_2(b), \dots, \lambda_n(b))$ is a basis for the period lattice $\Lambda_b$ for all $b \in B^n$
\item $\lambda_i^n$ vanishes along $B^{n-1}\times \{0\}$ for $i < n$, and $\lambda_i^n$ equals the modular period $c$ along $B^{n-1}\times\{0\}$. Here, $\lambda_i^{j}$ denotes the $j^{\textrm{th}}$ component of $\lambda_i$.
\end{itemize}
 Such functions $\lambda_i$ exist that satisfy the first condition (perhaps after shrinking $B^n$) by the implicit function theorem, using the fact that the Jacobian of the equation $\Phi(\lambda, m) = m$ is regular with respect to the $s$ variables. The fact that they can be chosen to satisfy the second condition is due to the discussion above.

Using these functions $\lambda_i$ we define the ``uniformized'' flow
\begin{align*}
\tilde \Phi :\R^n \times ( \T^n \times B^n ) &\to ( \T^n \times B^n )\\
 \big((s_1,\ldots,s_n),(x, b)\big) &\mapsto \Phi\big(\sum_{i=1}^n s_i \lambda_i(c), (x,b) \big).
\end{align*}
The period lattice of this $\R^n$ action is constant now (namely $\Z^n$) and hence the action naturally defines a $\T^n$ action.

\paragraph{\emph{Step 2 -- the $\T^n$ action is $b$-Hamiltonian.}}

We want to find $b$-functions $\sigma_1, \dots, \sigma_n$

 such that the $b$-hamiltonian vector fields $X_{\sigma_i}$ are precisely the fundamental vector fields of the $\mathbb{T}^n$ action we constructed above, $Y_i = \sum_{j = 1}^n \lambda_i^j X_{f_j}$.

The Cartan formula for $b$-symplectic forms (where the differential is the one of the complex of $b$-forms \cite{Guillemin2012}  \footnote{The decomposition of a $k$-form in the $b$-complex as $\omega=\frac{dt}{t}\wedge \alpha+\beta$ for  $\alpha, \beta$ De Rham forms proved in \cite{Guillemin2012} allows to extend the Cartan formula to the setting of $b$-forms}) gives
\begin{align} \label{eqn:poisson}
\mathcal{L}_{Y_i}\mathcal{L}_{Y_i}\omega &= \mathcal{L}_{Y_i}(d(\iota_{Y_i}\omega)+\iota_{Y_i} d\omega)\\
&= \mathcal{L}_{Y_i}(d(-\sum_{j=1}^n \lambda_i^j d{f_j})) \\
&= -\mathcal{L}_{Y_i}\left(\sum_{j=1}^n d\lambda_i^j \wedge d{f_j}\right) = 0
\end{align}

\noindent where in the last equality we used the fact that $\lambda_i^j$ are constant on the level sets of $F$. Recall from \cite{Laurent-Gengoux2010} (see Claim 2 in page 1856) that if $Y$ is any complete periodic vector field, and $P$ is a bivector field for which $\mathcal{L}_Y\mathcal{L}_Y P = 0$, then $\mathcal{L}_Y  P= 0$. This proves that $\mathcal{L}_{Y_i}\omega = 0$, so the vector fields $Y_i$ are Poisson vector fields.

To show that each $\iota_{Y_i}\omega$ is has a $^bC^{\infty}$ primitive, it suffices to show that the smooth part of $[\iota_{Y_i}\omega]$, i.e. the first summand in its image under the Mazzeo-Melrose isomorphism (see Theorem \ref{thm:mazzeomelrose}) ${^b}H^1(\mathbb{T}^n \times B^n) \cong H^1(\mathbb{T}^n \times B^n) \oplus H^0(\mathbb{T}^n \times B^n)$, is zero. But this follows from the fact that the value of the smooth part $H^1(\mathbb{T}^n \times B^n)$ of $\iota_{Y_i}\omega$ is determined by integrating it along loops which are tangent to the fibers. Since the kernel of $\iota_{Y_i}\omega$ contains the tangent space to the torus fibers, these integrals are zero.

Hence the $Y_i$ are $b$-Hamiltonian and we denote their Hamiltonian functions by $\sigma_i$. Because $\lambda_i^n$ vanish along $B^{n-1}\times\{0\}$ for $i < n$, the forms $\iota_{Y_i}\omega = -\sum_{j=1}^n \lambda_i^j d{f_j}$ are smooth for $i < n$, so $\sigma_i$ are smooth for $i < n$. Because $\lambda_n^n$ equals $c$ along $B^{n-1}\times\{0\}$, $\sigma_n$ has the form $c\log|t|$ for some defining function $t$.

\paragraph{\emph{Step 3 -- apply the Darboux-Carath\'{e}odory theorem.}}

The construction above gives us candidates $\sigma_1,\ldots, \sigma_n = c\log|t|$ for the the ``action coordinates.''
By the Darboux-Carath\'{e}odory theorem we can \emph{locally} find a coordinate system
\[
(\sigma_1, \dots, \sigma_{n-1}, t, q_1,\ldots, q_n)
\]
such that
\begin{equation}\label{eq:aab}
\omega = \sum_{i=1}^{n-1} d\sigma_i \wedge dq_i + \frac{c}{t} d{t}\wedge dq_n.
\end{equation}
Since the vector fields $X_{\sigma_i}= \frbd{q_i} (i=1,\ldots,n)$ are the fundamental vector fields of the $\T^{n}$-action, in the local chart introduced above the flow of the vector fields gives a linear action on the $q_i$ coordinates.

Therefore, if the functions $\sigma_1,\ldots, \sigma_{n-1}, t$ were initially defined on an open set $U\subset M$, we can extend them to the whole set $U':=p^{-1}(p(U))$ (i.e. the union of all tori that intersect non-trivially with $U$). We denote the extensions of these functions by the same symbols.

The vector fields $\frbd{q_i}(i=1,\ldots,n)$ have period $1$, so we can view $q_i (i=1,\ldots,n)$ as $S ^1$ valued coordinates. We denote them by the ``angle'' variable $\theta_i$ for this reason.

It remains to check that the extended functions $(\sigma_1, \dots, \sigma_{n-1}, t, \theta_1, \dots ,\theta_n)$ define a coordinate system on $U'$ and that $\omega$ still has the form

\begin{equation}\label{eq:aab2}
\omega = \sum_{i=1}^{n-1} d\sigma_i \wedge d\theta_i + \frac{c}{t} d{t}\wedge d\theta_n.
\end{equation}

It is clear that $\{\sigma_i, \theta_j\}=\delta_{ij}$ on $U'$. To show that $\{\theta_i, \theta_j\}=0$, we note that this relation holds on $U$ and flowing with the vector fields $X_{\sigma_k}$ we see that it holds on the whole set $U'$:
$$ X_{\sigma_k}\big(\{\theta_i,\theta_j\} \big) = \{\{\theta_i, \theta_j\},  \sigma_k\} = \{ \theta_i, \delta_{ij}\} - \{\theta_j,\delta_{ik}\} = 0. $$
This verifies that $\omega$ has the form \eqref{eq:aab2} above and in particular, we conclude that the derivatives of the functions $ \sigma_1, \theta_1,\ldots,  \sigma_{n-1}, \theta_{n-1}, t, \theta_n$ are independent on $U$, hence these functions define a coordinate system. This completes the proof of the action-angle coordinate theorem.

\section{KAM Theorem for $b$-integrable systems}\label{sec:kamb}

We will prove a stability result of KAM type for Hamiltonian systems on $b$-Poisson manifolds, allowing Hamiltonian functions which are $b$-functions. For a given Hamiltonian function $H\in \,^b C^\infty(M)$ the Hamiltonian equations are completely analogous to the symplectic case: Any function $g\in C^\infty(M)$ evolves according to the equation $\dot g = \{g,H\} = X_H(g)$.

Assume that we are given a $b$-integrable system on $M$. From the action-angle coordinate theorem for $b$-symplectic manifolds, Theorem~\ref{thm:aa}, it follows that we can semilocally (around a Liouville torus) replace the given $b$-integrable system by the functions $y_1,\ldots,y_n$ on $\T^n \times B^n$, where the $y_i$ are the projections to the $i$-th component of $B^n$ and the $b$-symplectic structure is
$$\frac{c}{y_1} d{\varphi_1}\wedge d{y_1} + \sum_{i=2}^{n-1} d{\varphi_i}\wedge d{y_i}.$$
We want to study the stability of quasi-periodic motion inside the exceptional hypersurface $Z = \{ y_1 = 0 \}$ of $(\T^n \times B^n, \omega)$. As in the symplectic case, we consider a Hamiltonian ``compatible'' with the integrable system in the sense that it only depends on the action coordinates $y$,
\begin{equation}\label{b-hamiltonian}
H(\varphi, y ) = k \log|y_1| + h(y).
\end{equation}
The resulting equations of motion are
\begin{align}\label{b-ham_sys_eom}
   \begin{split}
      &\dot { \varphi_1} = \frac{k}{c} + \frac{y_1}{c} \frbdt{h}{y_1}(y) \\
      &\dot { \varphi_i} =  \frbdt{h}{y_i}(y) \ \ \qquad i=2,\ldots,n \\
      &\dot {y_1} =  0  \\
      &\dot {y_i} =  0 \qquad i=2,\ldots,n .
   \end{split}
\end{align}

In terms of notation, if $x\in \R^n$, we will write $\tilde x$ for the $\R^{n-1}$ vector obtained by omitting the first component $\tilde x := (x_2, \ldots, x_n)$. We see that on $Z$ the angle coordinates $(\varphi_1, \ldots, \varphi_n)$ evolve with frequency
$$\big (\frac{k}{c}, \frbdt{h}{y_2}(y), \ldots, \frbdt{h}{y_n}(y)) =: ( \frac{k}{c} , \tilde \omega \big)$$
on an $n$-torus $\T^n \times \{\const\} \subset Z$.

We want to study the effect of adding a perturbation $\epsilon P\in\, ^b \! C^\infty(M)$. Writing
\begin{equation}\label{perturbb}
 P (\varphi,y) = k' \log|y_1| + \epsilon f(\varphi, y )
\end{equation}
 the equations of motion of the perturbed system are:
\begin{align}\label{b-ham_sys_perturbed_eom}
    \begin{split}
       &\dot { \varphi_1} = \frac{k + \epsilon k'}{c} + \frac{y_1}{c} \frbd{y_1} (h(y)+ \epsilon f(\varphi,y)) \\
       &\dot { \varphi_i} =  \frbd{y_i}(g(y)+ f(\varphi,y)) = \tilde \omega_i(y) + \epsilon \frbdt{f}{y_i}(\varphi,y) , \qquad i=2,\ldots,n \\
       &\dot {y_1} =  -\epsilon \frac{y_1}{c} \frbdt{f}{\varphi_1}(\varphi,y) \\
       &\dot {y_i} =  - \epsilon \frbdt{f}{\varphi_i}(\varphi,y).
    \end{split}
\end{align}

Notice that the hypersurface $Z = \{y_1 = 0\}$ is preserved by the perturbation. In the following discussion of stability of the orbits we restrict ourselves to the case where the motion starts inside $Z$ (and necessarily remains there);  the other case is the classical KAM theorem for symplectic manifolds.

We want to consider the case where the function $f$ in the expression \eqref{perturbb} for the perturbation $P$ has the form
\begin{equation}\label{assumptionf}
 f(\varphi, y) = f_1(\tilde \varphi, y ) + y_1 f_2(\varphi, y) + f_3(\varphi_1,y_1),
\end{equation}
where $f_1$ is an analytic function and $f_2, f_3$ are smooth functions. In particular, this case occurs if $f$ does not depend on $\varphi_1$.

\begin{theorem}[KAM Theorem for $b$-Poisson manifolds]

Let $\T^n \times B_r^n$ be endowed with the standard $b$-symplectic structure. Consider a $b$-function $H = k \log|y_1| + h(y)$ on this manifold, where $h$ is analytic. Let $y_0$ be a point in $B_r^n$ with first component equal to zero, so that the corresponding level set $\T^n \times \{y_0\}$ lies inside the exceptional hypersurface $Z$.

Assume that the frequency map $\tilde \omega( y):= \frbdt{h}{\tilde y}(y)$ has a Diophantine value $\tilde \omega := \tilde \omega(y_0)$ at $y_0 \in B^n$ and that it is non-degenerate at $y_0$ in the sense that the Jacobian $ \frbdt{\tilde \omega}{\tilde y} (y_0) $ is regular.

Then the torus $\T^n \times \{ {y_0}\}$  persists under sufficiently small perturbations of $H$ which have the form mentioned above, i.e. they are given by $\epsilon P$, where $\epsilon \in \R$ and $P \in ^b \! C^\infty(\T^n \times B_r^n)$ has the form
\begin{align*}
 P(\varphi, y) &= \log |y_1| + f(\varphi,y) \\
 f(\varphi, y) &= f_1(\tilde \varphi, y ) + y_1 f_2(\varphi, y) + f_3(\varphi_1,y_1).
\end{align*}
More precisely, if $|\epsilon|$ is sufficiently small, then the  perturbed system
  $$ H_\epsilon =H + \epsilon P$$
admits an invariant torus $\mathcal{T}$.

Moreover, there exists a diffeomorphism $\T^n \to \mathcal{T}$ close\footnote{By saying that the diffeomorphism is ``$\epsilon$-close to the identity'' we mean that, for given $H, P$ and $r$, there is a constant $C$ such that $\norm{\psi - \id} < C \epsilon.$} to the identity taking the flow $\gamma^t$  of the perturbed system on $\mathcal{T}$ to the linear flow  on $\T^n$ with frequency vector  $ (\frac{k+\epsilon k'}{c}, \tilde \omega)$.
\end{theorem}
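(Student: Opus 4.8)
The plan is to reduce the statement to the classical KAM theorem (Theorem~\ref{th:kam}) applied to an $(n-1)$-dimensional \emph{symplectic} subsystem obtained by restricting the dynamics to the exceptional hypersurface $Z=\{y_1=0\}$. The guiding observation is that, along $Z$, the equations of motion \eqref{b-ham_sys_perturbed_eom} decouple: the angle $\varphi_1$ rotates rigidly while the remaining variables $(\tilde\varphi,\tilde y)$ evolve by an autonomous analytic Hamiltonian system on $\T^{n-1}\times B^{n-1}$ with its standard symplectic form. The structural assumption \eqref{assumptionf} on $f$ is tailored precisely to produce this decoupling.

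First I would restrict all four families of equations in \eqref{b-ham_sys_perturbed_eom} to $Z$. Since $b$-Hamiltonian vector fields are tangent to $Z$, the set $\{y_1=0\}$ is automatically invariant, so $\dot y_1=0$ and $y_1\equiv 0$ along the orbit; in particular the singular terms $\log|y_1|$ contribute only the constant frequency $\tfrac{k+\epsilon k'}{c}$ to $\dot\varphi_1$, as already recorded in \eqref{b-ham_sys_perturbed_eom}. Setting $y_1=0$ in the assumed form $f=f_1(\tilde\varphi,y)+y_1 f_2(\varphi,y)+f_3(\varphi_1,y_1)$, the terms carrying an explicit factor $y_1$ drop out, the factor $\tfrac{y_1}{c}$ annihilates the bounded derivative $\partial_{y_1}(h+\epsilon f)$, and $f_3$ survives only through $\varphi_1$ and $y_1$. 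What remains is
\begin{align*}
\dot\varphi_1 &= \tfrac{k+\epsilon k'}{c}, & \dot{\tilde\varphi} &= \tilde\omega(y)+\epsilon\,\partial_{\tilde y}f_1(\tilde\varphi,(0,\tilde y)),\\
\dot y_1 &= 0, & \dot{\tilde y} &= -\epsilon\,\partial_{\tilde\varphi}f_1(\tilde\varphi,(0,\tilde y)),
\end{align*}
where $y=(0,\tilde y)$. Crucially, the right-hand sides of the $(\tilde\varphi,\tilde y)$ equations do not depend on $\varphi_1$, so this subsystem is autonomous; it is exactly Hamilton's equations on $(\T^{n-1}\times B^{n-1},\sum_{i\ge 2}d\varphi_i\wedge dy_i)$ for the analytic Hamiltonian $\tilde H(\tilde\varphi,\tilde y)=\tilde h(\tilde y)+\epsilon f_1(\tilde\varphi,(0,\tilde y))$, with $\tilde h(\tilde y):=h(0,\tilde y)$.

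Next I would check that this reduced system meets the hypotheses of classical KAM. The unperturbed reduced Hamiltonian $\tilde h$ is analytic, its frequency map is $\partial_{\tilde y}\tilde h(\tilde y)=\tilde\omega((0,\tilde y))$, and at $\tilde y_0$ (where $y_0=(0,\tilde y_0)$) it equals the Diophantine vector $\tilde\omega$; the non-degeneracy condition $\det\partial_{\tilde y}\tilde\omega(y_0)\neq 0$ is exactly \eqref{nondegenerate} for the reduced system, and the perturbation $\epsilon f_1(\tilde\varphi,(0,\tilde y))$ is analytic by hypothesis on $f_1$. Applying Theorem~\ref{th:kam} yields, for $|\epsilon|$ sufficiently small, an invariant torus $\tilde{\mathcal T}\subset\T^{n-1}\times B^{n-1}$ close to $\T^{n-1}\times\{\tilde y_0\}$ together with a diffeomorphism $\tilde\psi:\T^{n-1}\to\tilde{\mathcal T}$, $\epsilon$-close to the identity, conjugating the reduced flow to the linear flow with frequency $\tilde\omega$.

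Finally I would lift the result back to $Z$. Because $\varphi_1$ evolves by the constant rotation $\tfrac{k+\epsilon k'}{c}$ independently of $(\tilde\varphi,\tilde y)$, the dynamics on $Z$ is a genuine product, so $\mathcal T:=\T^1_{\varphi_1}\times\tilde{\mathcal T}$ is an invariant $n$-torus of the full perturbed system and $\psi:=\id_{\T^1}\times\tilde\psi:\T^n\to\mathcal T$ is a diffeomorphism $\epsilon$-close to the identity conjugating $\gamma^t$ to $(\phi_1,\tilde\phi)\mapsto(\phi_1+\tfrac{k+\epsilon k'}{c}\,t,\ \tilde\phi+\tilde\omega\,t)$, i.e. the linear flow on $\T^n$ with frequency $(\tfrac{k+\epsilon k'}{c},\tilde\omega)$, as claimed. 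I expect the only genuinely delicate point to be the first step: verifying that the restricted dynamics decouples \emph{exactly}, with the $(\tilde\varphi,\tilde y)$ subsystem simultaneously autonomous and Hamiltonian. Everything afterward is a direct invocation of the classical theory, since the entire $b$-symplectic content has been absorbed into that decoupling.
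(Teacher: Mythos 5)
Your proposal is correct and follows essentially the same route as the paper: both exploit the structural hypothesis on $f$ to decouple the dynamics on $Z=\{y_1=0\}$ into a rigid rotation in $\varphi_1$ times the classical perturbed system $\overline h+\epsilon\overline f_1$ on $\bigl(\T^{n-1}\times B^{n-1},\sum_{i\ge 2}d\varphi_i\wedge dy_i\bigr)$, and then invoke the classical symplectic result. The only cosmetic difference is that the paper applies Kolmogorov's normal-form theorem (Theorem~\ref{thm:kolmogorov}) to build an explicit Poisson diffeomorphism of the ambient $b$-manifold acting as the identity in $(\varphi_1,y_1)$, whereas you invoke the KAM theorem (Theorem~\ref{th:kam}) directly on the reduced system and take the product with $\id_{\T^1}$ --- the two are interchangeable here.
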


\begin{proof}
First assume that $y_0 = 0$. We will prove the general case later on. We consider the restrictions of $h$ and $f_1$ to $Z$ as functions on $B_r^{n-1}$ resp. $\T^{n-1} \times B_r^{n-1}$:
 $$\overline h (\tilde y) := h(0, \tilde y), \qquad \overline f_1(\tilde \varphi, \tilde y) := f_1(\tilde \varphi, 0, \tilde y).$$
 By the Kolmogorov theorem for symplectic manifolds, Theorem \ref{thm:kolmogorov}, there exists a constant $\epsilon_0>0$ such that for $0<\epsilon < \epsilon_0$ there is a symplectomorphism $\overline \psi: \T^{n-1} \times B^{n-1}_{r_\ast} \to \T^{n-1} \times B^{n-1}_r $ such that
 $$ (\overline h + \epsilon \overline f_{1})\circ \overline \psi = h_\ast$$
 is a function in Kolmogorov normal form on $\T^{n-1} \times B^{n-1}_{r_\ast}$ with frequency vector $\tilde \omega$. Denoting
 $\overline \psi(\tilde \varphi, \tilde y )=: (\tilde \varphi', \tilde y ')$ we define a Poisson diffeomorphism
 $$ \psi: \T^{n} \times B^{n}_{r_\ast} \to \T^{n} \times B^{n}_r  , \quad  \psi(\varphi, y):=(\varphi_1, \tilde \varphi', y_1, \tilde y').$$
Then for $(\varphi, y ) \in Z \subset \T^n \times B_{r_\ast}^n$:
 \begin{align*}
   (H+&\epsilon P) \circ  \psi (\varphi, y ) = (H+\epsilon P) (\varphi_1, \tilde \varphi', y_1, \tilde y') = \\
   = &(k+\epsilon k') \log |y_1| + \epsilon y_1 f_2(\varphi_1, \tilde \varphi', y_1, \tilde y') + \epsilon f_3(\varphi_1,y_1) + \\
   & + h(y_1, \tilde y') +\epsilon f_1( \tilde \varphi', y_1, \tilde y') .
 \end{align*}
By the argument above, on $Z$ (i.e. $y_1 = 0$) the two last terms can be written as
  \begin{align*}
h(y_1, \tilde y') +\epsilon f_1( \tilde \varphi', y_1, \tilde y') = \overline  h(\tilde y') + \epsilon \overline f(\tilde \varphi',\tilde y')   = h_\ast(\tilde \varphi, \tilde y) .
 \end{align*}
Keeping in mind that $h_\ast$ is of Kolmogorov normal form with frequency vector $\tilde \omega$,
$$h_\ast  (\tilde \varphi, \tilde y) = E_\ast + \tilde \omega \times \tilde y + Q_\ast(\tilde \varphi, \tilde y), $$
where $E_\ast \in \R$, $Q_\ast = \SO(|y|²)$, we see by looking at the equations of motion \eqref{b-ham_sys_perturbed_eom} that the trajectories of  $H_\ast := (H+\epsilon P) \circ \psi$ on $Z$ are precisely given by quasi-periodic motion with frequency $ (\frac{k+\epsilon k'}{c}, \tilde \omega)$ on the tori $\T^n \times \{m\}$ for $m \in B_{r_\ast}$.

The flow of the Hamiltonian vector field associated to $H+\epsilon P$ is conjugated under $\psi$ to the flow of the Hamiltonian vector field associated to $H_\ast$:
$$ \gamma^t_{X_{H+\epsilon P}  } = \psi \circ \gamma^t_{X_{H_\ast}} \circ \psi^{-1}.$$
Since the flow $\gamma^t_{X_{H_\ast}}$ leaves the torus $\T^n\times \{0\}$ invariant, the flow of the perturbed system $\gamma^t_{X_{H+\epsilon P}  }$ leaves $\ST:= \psi(\T^n \times \{0\}) \cong \T^n$ invariant.

In conclusion, the motion induced by $H+ \epsilon P$ on $\ST$ is conjugated via $\psi$ to the quasi-periodic motion on $\T^n\times \{0\}$ with frequency $(\frac{k+\epsilon k'}{c}, \tilde \omega)$.

Since the diffeomorphism $\overline \psi$ obtained from the Kolmogorov theorem for symplectic manifolds is $\epsilon$-close to the identity, the transformation $\psi$ we construct is also $\epsilon$-close to the identity. \\

Now consider the case where $y_0 \neq 0$. Let $\tau$ be the translation which takes $y_0$ to $0$. Note that $\tau$ only changes the last $n-1$ components since we assume that the first component of $y_0$ is already 0, so in particular $\tau$ is a Poisson diffeomorphism
$$\T^n\times B_{r'}(y_0) \to  \T^n\times B_{r'}(0), $$
where $B_{r'}(y_0)$ is a ball around $y_0$ of some radius $r'>0$ contained in the original ball $B_r^n$ and we endow the sets with the $b$-symplectic structure inherited from $\T^n\times B_r^n$.
Now we apply the argument above (case $y_0 = 0$) to the Hamiltonian $H\circ \tau$ and the perturbation $P\circ \tau$. Denote the diffeomorphism obtained there by $\psi_0$,
$$\psi_0 : \T^n\times B^n_{r_\ast'}\to  \T^n\times B^n_{r'}.$$
Setting $\psi := \tau \circ \psi_0 \circ \tau^{-1}$ we obtain a Poisson diffeomorphism $\T^n\times B_{r_\ast'}(y_0) \to \T^n\times B^n_{r'}(y_0)$ which is $\epsilon$-close to the identity and conjugates the motion on $\ST := \psi(\T^n \times \{y_0\})$ to quasi-periodic motion on $\T^n \times \{y_0\}$ with frequency $(\frac{k+\epsilon k'}{c}, \tilde \omega)$.
\end{proof}

\begin{remark}
 Note that the first component of the frequency vector changes; only the last $n-1$ components of $\omega$ are preserved. Moreover, since we only assume the Diophantine condition for the last $n-1$ components, the orbit through $p_0$ might not fill the whole torus $\T^n \times \{y_0\}$ densely. However, even in these cases the torus $\T^n \times \{y_0\}$ is invariant.
\end{remark}

\begin{remark}
 A special case is where the functions  $H$ and $P$ are smooth, i.e. the $\log$-component is zero:
$$H = h \in C^\infty(B_r^n), \qquad P = f \in C^\infty(\T^n \times B_r^n).$$
 In this case we do not have to make the assumption that $f$ has the form given in Equation \eqref{assumptionf} to obtain stability of the orbits inside $Z$. From the equations of motion it is clear that the trajectory starting at a point inside a symplectic leaf $\SL \subset Z$ will stay inside the leaf. This is true also after adding the perturbation. Hence the stability of the orbit follows directly from the symplectic KAM theorem: If $H$ is in $b$-Kolmogorov normal form with vanishing $\log$ component (i.e. a $C^\infty$ function) and with Diophantine frequency vector  $\tilde \omega$ and if $P$ is any $C^\infty$ perturbation, then there is a symplectomorphism on a neighborhood of the orbit {\it inside $\SL$} which is close to the identity and takes the perturbed orbit to a nearby $n-1$ torus $\{\varphi_1\} \times \T^{n-1} \times \{y\}$. The perturbed motion is conjugated to linear motion in the $\tilde \varphi := (\varphi_2,\ldots,\varphi_n)$ coordinates with frequency $\tilde \omega$. Note that we only
transform
inside the leaf $\SL$ here -- for showing stability
this is sufficient.
\end{remark}

\begin{remark} In view of example \ref{manifoldswithboundary}, this KAM theorem can be useful to study perturbations of integrable systems on manifolds with boundary.
\end{remark}

\end{document}